\theoremstyle{plain}
\newtheorem{thm}{Theorem}[section]
\newtheorem{cor}[thm]{Corollary}
\newtheorem{lem}[thm]{Lemma}
\newtheorem{prop}[thm]{Proposition}
\theoremstyle{definition}
\newtheorem{defn}[thm]{Definition}
\newtheorem{exmp}[thm]{Example}
\newtheorem{rem}[thm]{Remark}
\newtheorem{prob}[thm]{Problem}
\newcommand\Q{{\mathbb Q}}
\newcommand\Z{{\mathbb Z}}
\newcommand{\Gal}{\mathop{\mathrm{Gal} }\nolimits}
\newcommand{\GL}{\mathop{\mathrm{GL} }\nolimits}
\newcommand{\Sp}{\mathop{\mathrm{Sp} }\nolimits}
\newcommand{\GSp}{\mathop{\mathrm{GSp} }\nolimits}
\newcommand\Oh{\mathcal{O}}
\newcommand\F{\mathbb{F}}
\newcommand\fm{\mathfrak{m}}
\newcommand\wild{\mathrm{w}}
\newcommand\tame{\mathrm{t}}
\newcommand\Id{\mathrm{Id}}
\newcommand\Frob{\mathrm{Frob}}
\newcommand{\Spec}{\mathop{\mathrm{Spec} }\nolimits}
\newcommand{\card}{\mathop{\mathrm{card} }\nolimits}
\newcommand{\Aut}{\mathop{\mathrm{Aut} }\nolimits}
\newcommand\SL{\mathrm{SL}}
\newcommand\fp{\mathfrak{p}}
\newcommand\calP{\mathcal{P}}
\newcommand\calA{\mathcal{A}}
\newcommand\calC{\mathcal{C}}
\newcommand\End{\mathrm{End}}
\newcommand\unr{\mathrm{unr}}
\newcommand\disc{\mathrm{disc}}
\title{Tame Galois realizations of $\GSp_4(\F_{\ell})$ over $\Q$}
\date{}
\author{Sara Arias-de-Reyna and  N\'uria Vila}
\begin{document}
\maketitle

\footnotetext{\hskip -0.6cm \emph{2010 Mathematics Subject
Classification:} 12F12, 11F80, 11G30, 11R32\\
Research supported by a FPU predoctoral grant AP-20040601 of the MEC
and partially supported by MEC grant MTM2006-04895.}

\begin{abstract}
In this paper we obtain realizations of the $4$-dimensional general
symplectic group over a prime field of characteristic $\ell>3$ as
the Galois group of a tamely ramified Galois extension of $\Q$. The
strategy is to consider the Galois representation $\rho_{\ell}$
attached to the Tate module at $\ell$ of a suitable abelian surface.
We need to choose the abelian varieties carefully in order to ensure
that the image of $\rho_{\ell}$ is large and simultaneously maintain
a control on the ramification of the corresponding Galois extension.
We obtain an explicit family of curves of genus $2$ such that the
Galois representation attached to the $\ell$-torsion points of their
Jacobian varieties provide tame Galois realizations of the desired
symplectic groups.
\end{abstract}

\section{Introduction}

A central problem in number theory is the study of the structure of
the absolute Galois group $G_{\Q}=\Gal(\overline{\Q}/\Q)$. The
Inverse Galois Problem over $\Q$, first considered by D. Hilbert,
can be reformulated as a question about the finite quotients of this
absolute Galois group. In spite of the many efforts made to solve
the Inverse Galois Problem, it still remains open. Of course, these
attempts have not been fruitless, and there are many finite groups
that are known to be Galois groups over $\Q$. For an idea of the
progress made, the reader can leaf through \cite{Topics in Galois
Theory} or \cite{Malle-Matzat}.

Assume that a finite group $G$ can be realized as a Galois group
over $\Q$, say $G\simeq \Gal(K_1/\Q)$, where $K_1/\Q$ is a finite
Galois extension. But perhaps we are interested in field extensions
with some ramification property, and $K_1/\Q$ does not satisfy it.
We can ask whether there exists some other finite Galois extension,
$K_2/\Q$, with Galois group $G$ and enjoying this additional
property. In this connection, several variants of the Inverse Galois
Problem have been studied.

This paper is concerned with the following problem, posed by Brian
Birch (cf. \cite{Birch}, Section 2).

\begin{prob}[Tame Inverse Galois Problem]\label{ProblemaBirch} Given a
finite group G, is there a tamely ramified Galois extension $K/\Q$
with $\Gal(K/\Q)\simeq G$?
\end{prob}

This problem has already been dealt with for certain families of
groups. It is  known that solvable groups, all symmetric groups
$S_n$, all alternating groups $A_n$, the Mathieu groups $M_{11}$ and
$M_{12}$, and their finite central extensions are realizable as the
Galois group of a tamely ramified extension (cf.
\cite{Kluners-Malle}, \cite{Plans-Vila2003}, \cite{Plans2003}).

One way to deal with the Inverse Galois Problem, and eventually with
Problem \ref{ProblemaBirch}, is to consider continuous Galois
representations of the absolute Galois group of the rational field.
Let $V$ be a finite dimensional vector space over a finite field
$\F$, and consider a continuous representation of $G_{\Q}$ (endowed
with the Krull topology) in the group of automorphisms of $V$ (with
the discrete topology), say
\begin{equation*}\rho: G_{\Q}\rightarrow \GL(V).\end{equation*}
Then
\begin{equation*}\mathrm{Im}\rho\simeq G_{\Q}/\ker \rho\simeq
\Gal(K/\Q),\end{equation*} where $K/\Q$ is a finite Galois
extension.

In other words, a continuous Galois representation of $G_{\Q}$
provides a realization of $\mathrm{Im}\rho$ as a Galois group over
$\Q$. This strategy has already been used to address the Inverse
Galois Problem for families of linear groups (cf. \cite{Vila2004},
\cite{Dieulefait-Vila2000}, \cite{Dieulefait-Explicit
determination}, \cite{Wiese}, \cite{Dieulefait-Wiese},
\cite{KhareLarsenSavin}).

In this paper we will study the Galois representations that arise
through the action of the absolute Galois group $G_{\Q}$ on the
$\ell$-torsion points of abelian surfaces.

For each prime number $p$, let us fix an immersion
$\overline{\Q}\hookrightarrow \overline{\Q}_{p}$. This induces an
inclusion of Galois groups $\Gal(\overline{\Q}_p/ \Q_p)\subset
G_{\Q}$. Inside $\Gal(\overline{\Q}_{p}/ \Q_p)$ we can consider the
inertia subgroup $I_p=\Gal(\overline{\Q}_p/\Q_{p, \mathrm{unr}})$
and the wild inertia subgroup $I_{p,
\wild}=\Gal(\overline{\Q}_p/\Q_{p, \tame})$, where $\Q_{p,
\mathrm{unr}}$ and $\Q_{p, \tame}$ denote the maximal unramified
extension and the maximal tamely ramified extension of $\Q_p$,
respectively. A prime $p$ is unramified (respectively tamely
ramified) in the Galois extension $K/\Q$ if and only if $\rho(I_p)=
\{\Id\}$ (respectively $\rho(I_{p,\wild}) =\{\Id\}$). Therefore, if
we want a Galois representation $\rho$ to yield a tamely ramified
Galois extension $K/\Q$, we will have to ensure that $\rho(I_{p,
\wild})=\{\Id\}$ for all prime numbers $p$.

In \cite{Ariasdereyna-Vila2009}, the authors apply this strategy to
solve Problem \ref{ProblemaBirch} for the family of $2$-dimensional
linear groups $\GL_2(\F_{\ell})$. More precisely, they consider the
Galois representations attached to the $\ell$-torsion points of
elliptic curves, and construct explicitly elliptic curves such that
this representation is surjective and moreover the image of the wild
inertia group $I_{p, \wild}$ is trivial for all primes $p$. In this
paper we will extend these results to the case of curves of genus
$2$. Namely, we will prove that, for all prime number $\ell\geq 5$
there exist (infinitely many) genus $2$ curves $C$ such that the
Galois representation attached to the $\ell$-torsion points of the
Jacobian of $C$ provides a tamely ramified Galois realization of
$\GSp_4(\F_{\ell})$ (cf. Theorem \ref{TeoremaPrincipal}).

We want to thank Luis Dieulefait for many enlightening
conversations.

\section[Semistable Reduction]{The Action of the Inertia
Group}\label{p distinto de l}

Let $A/\Q$ be an abelian variety of dimension $n$, and let us fix a
prime $\ell$. Consider the Galois representation
\begin{equation*}\rho_{\ell}:G_{\Q}\rightarrow
\Aut(A[\ell])\simeq \GL_{2n}(\F_{\ell}).\end{equation*} attached to
the $\ell$-torsion points of $A$. This representation gives rise to
a realization of $\mathrm{Im}\rho_{\ell}$ as Galois group over $\Q$,
say $\mathrm{Im}\rho_{\ell}\simeq \Gal(K/ \Q)$, where $K$ is the
number field fixed by $\ker\rho_{\ell}$. In this section we are
interested in finding conditions over the variety $A$ that ensure
that $K/ \Q$ is tamely ramified.

Note that it suffices to control the image by $\rho_{\ell}$ of
$I_{p, \wild}$ just for a finite quantity of primes $p$. Indeed,
$I_{p, \wild}$  is a pro-$p$-group, so that if $p$ does not divide
the cardinal of $\GL_{2n}(\F_{\ell})$, the cardinal of
$\mathrm{Im}(I_{p, \wild})$ must be $p^0=1$.

We will first address the problem of obtaining tame ramification at
a prime $p\not=\ell$. Let us place ourselves in a more general
setting. Let $F$ be a number field, and let us consider an abelian
variety $A/F$. Let  $\fp$ be a prime ideal of the ring of integers
$\Oh_F$ of $F$. We begin by recalling what it means for $A$ to have
semistable reduction at $\fp$ (see \cite{Diophantine Geometry}, $\S$
A.9.4.).

\begin{defn} We will
say that $A/F$ has \emph{semistable reduction} at a prime $\fp\in
\Spec\Oh_F$ if the connected component of the special fibre of the
N\'eron model of $A$ at $\fp$, $\calA_{\fp}^0$, is the extension of
an abelian variety by a torus.

\end{defn}

The kind of reduction of an abelian variety at a prime $\fp$ is
reflected in the action of the inertia group at $\fp$ on the Tate
module of the variety. For instance, if the variety has good
reduction, the N\'{e}ron-Ogg-Shafarevich criterion ensures that the
inertia group acts trivially on the Tate module. Moreover, a result
of Grothendieck characterizes the case when the reduction is
semistable in terms of the action of the inertia group. As a
consequence, we have the following result.

\begin{thm}\label{tame at p} Let $A/\Q$ be an abelian variety, and
let $\ell, p$ be two different prime numbers. Assume that $A$ has
semistable reduction at $p$. Then the image of the wild inertia
group $I_{p, \wild}$ by the Galois representation $\rho_{\ell}$ is
trivial.
 \end{thm}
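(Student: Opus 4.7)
The plan is to reduce the claim to an elementary clash between pro-$p$ and pro-$\ell$ structures, using Grothendieck's analysis of the inertia action under semistable reduction.

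First, I would invoke Grothendieck's semistable reduction theorem (SGA 7, Expos\'e IX): for an abelian variety over a $p$-adic field and any prime $\ell\neq p$, the variety has semistable reduction if and only if the inertia group $I_p$ acts unipotently on the $\ell$-adic Tate module; more precisely, in the semistable case one has $(\rho_\ell(\sigma)-\Id)^2=0$ on $T_\ell A$ for every $\sigma\in I_p$. Reducing modulo $\ell$, the same identity holds on $A[\ell]=T_\ell A/\ell T_\ell A$, so every element of $\rho_\ell(I_p)\subset \GL_{2n}(\F_\ell)$ is unipotent.

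Next, I would extract the group-theoretic consequence. Any unipotent matrix $M=\Id+N\in \GL_{2n}(\F_\ell)$ has $\ell$-power order: since $N$ is nilpotent and we are in characteristic $\ell$, the expansion $M^{\ell^k}=\Id+\ell^k N+\binom{\ell^k}{2}N^2+\cdots$ yields $M^{\ell^k}=\Id$ as soon as $\ell^k$ exceeds the nilpotence index of $N$. Hence $\rho_\ell(I_p)$ is a finite $\ell$-group. By definition $I_{p,\wild}$ is a pro-$p$-group, and any continuous homomorphism from a pro-$p$-group into a finite $\ell$-group with $\ell\neq p$ is trivial (its image is at once a pro-$p$-group and an $\ell$-group, hence trivial). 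Applying this to $\rho_\ell$ restricted to $I_{p,\wild}$ gives $\rho_\ell(I_{p,\wild})=\{\Id\}$, as desired.

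The only genuine difficulty lies in quoting Grothendieck's theorem correctly; once the unipotence of the inertia action is in hand, everything is formal. As a variant, one could bypass the sharper relation $(\sigma-\Id)^2=0$ and use only the weaker unipotence statement: by Kolchin's theorem the image $\rho_\ell(I_p)$ may then be simultaneously conjugated into the upper-triangular unipotent subgroup of $\GL_{2n}(\F_\ell)$, which is a Sylow $\ell$-subgroup, and the same pro-$p$ versus $\ell$-group argument closes the proof.
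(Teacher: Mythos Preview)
Your proof is correct and follows essentially the same path as the paper's: both invoke Grothendieck's result from SGA~7, Expos\'e IX to see that $I_p$ acts unipotently on $T_\ell A$ (the paper phrases this via the two-step filtration of Proposition~3.5, you via the equivalent relation $(\sigma-\Id)^2=0$), reduce modulo~$\ell$ to conclude that the image consists of elements of $\ell$-power order, and then finish with the pro-$p$ versus $\ell$-group clash. Your write-up is somewhat more explicit, and the Kolchin variant you mention is a nice alternative packaging, but the substance is the same.
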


\begin{proof} Let us see that $I_{p,
\wild}$ acts trivially on $T_{\ell}(A)$. By Proposition 3.5 of
\cite{SGA7}, we know that there exists a submodule $T'\subset
T_{\ell}(A)$, fixed by the action of $I_p$, and such that $I_p$ acts
trivially on both $T'$ and $T_{\ell}(A)/T'$. Taking a suitable basis
of $T_{\ell}(A)$,  $I_p$ acts through a matrix of the shape
$\left(\begin{array}{cc} \mathrm{Id}_r& *\\
                  0 & \mathrm{Id}_s
\end{array}\right)$.

But the order of such a matrix is a divisor of $\ell$. Since $I_{p,
\wild}$ is a pro-$p$-group, its elements  all act as the identity.
\end{proof}

To control the image of the wild inertia group at the prime $\ell$
is a much more subtle matter. The first author addresses this
problem in \cite{Paper1}, and obtains a condition that guarantees
that the action is trivial (Theorem 3.3). Let us recall the
statement here.

\begin{thm}\label{InerciaEnl} Let $\ell$ be a prime number, and let $A/\Q$ be an abelian variety of dimension $n$ with good supersingular reduction at $\ell$. Call $\mathbf{F}$ the formal group law attached to $A$ at $\ell$, $v_{\ell}$ the $\ell$-adic valuation on $\overline{\Q}_{\ell}$ and $V$ the group of $\ell$-torsion points attached to $\mathbf{F}$.
Assume that there exists a positive $\alpha\in \Q$ such that, for
all non-zero $(x_1, \dots, x_n)\in V$, it holds that
$\min\{v_{\ell}(x_i): 1\leq i\leq n\}=\alpha$.

Then the image of the wild inertia group by the Galois
representation attached to the $\ell$-torsion points of $A$ is
trivial.
\end{thm}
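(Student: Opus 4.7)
The strategy is to reduce to analysing the action of the decomposition group $G_{\Q_\ell}$ on the formal $\ell$-torsion $V$, reformulate triviality of the wild inertia as tame ramification of a certain local field, and then use the uniform valuation hypothesis to verify that tameness.

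\emph{Step 1: reduction to the formal group.} Since $A/\Q_\ell$ has good supersingular reduction at $\ell$, the reduction $\tilde A/\F_\ell$ is a supersingular abelian variety, so $\tilde A[\ell](\overline{\F}_\ell) = 0$. Every $\ell$-torsion point of $A(\overline{\Q}_\ell)$ must therefore lie in the kernel of reduction on the N\'eron model, which is identified via $\mathbf{F}$ with $V$. Thus $A[\ell] \cong V$ as $G_{\Q_\ell}$-modules, and it suffices to show that $I_{\ell,\wild}$ acts trivially on $V$.

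\emph{Step 2: reformulation as tame ramification.} Let $K \subset \overline{\Q}_\ell$ be the finite extension of $\Q_\ell$ generated by all the coordinates of the points of $V$. Then $K$ is the fixed field of $\ker(\rho_\ell|_{G_{\Q_\ell}})$, and the triviality of $\rho_\ell(I_{\ell,\wild})$ is equivalent to the statement that $K/\Q_\ell$ is tamely ramified, i.e.\ that the ramification index $e = e(K/\Q_\ell)$ satisfies $(e,\ell) = 1$.

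\emph{Step 3: using the hypothesis.} Write $\alpha = a/b$ with $\gcd(a,b) = 1$. The valuation $v_\ell$ on $K$ takes values in $\tfrac{1}{e}\Z$, and since $\alpha$ is attained by some coordinate of some point of $V$, we get $b \mid e$. The non-trivial task is to produce a matching upper bound on $e$ that is coprime to $\ell$. Two ingredients are expected:
\begin{enumerate}
\item (Counting.) The power-series system $[\ell]_{\mathbf{F}}(T_1,\ldots,T_n) = 0$ has $\ell^{2n}$ solutions (the formal group has height $2n$ by supersingularity). Under the hypothesis, every non-zero solution has minimum coordinate valuation exactly $\alpha$; a Newton-polygon / polytope analysis in suitable coordinates then yields $b \mid \ell^{2n} - 1$, so $(b,\ell) = 1$.
\item (Generation by a single scale.) Still using the hypothesis, after dividing by a single element of valuation $\alpha$, every coordinate of every point of $V$ becomes an integral element whose reduction lies in an unramified extension of $\F_\ell$. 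Hence $K$ is generated over its maximal unramified subextension by a single uniformizer of valuation $\alpha$, giving $e = b$.
\end{enumerate}
Combining, $(e,\ell) = (b,\ell) = 1$, which is what we wanted.

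\emph{Main obstacle.} Step 3, particularly the two claims above, is the real heart of the argument. The Newton-polygon interpretation is standard in dimension one, but in dimension $n > 1$ one works with a system of $n$ power series in $n$ variables, and isolating a single slope $-\alpha$ from the uniform-minimum hypothesis requires choosing ``good'' coordinates on $\mathbf{F}$ adapted to the isoclinic structure implicit in the hypothesis. Similarly, ruling out further ramification coming from subtle relations between the coordinates of distinct $\ell$-torsion points uses the formal-group-law identities in an essential way. This is where most of the technical work lies.
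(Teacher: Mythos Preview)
The paper does not prove this theorem. It is quoted verbatim from \cite{Paper1} (Theorem~3.3 there), with the sentence ``The first author addresses this problem in \cite{Paper1}, and obtains a condition that guarantees that the action is trivial (Theorem 3.3). Let us recall the statement here.'' No argument is given in the present paper, so there is nothing to compare your proposal against.

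That said, a brief comment on your sketch. Steps~1 and~2 are standard and correct: supersingularity forces $A[\ell]\cong V$ as $G_{\Q_\ell}$-modules, and triviality of $\rho_\ell(I_{\ell,\wild})$ is exactly tameness of $K/\Q_\ell$. Your Step~3, however, is really a list of hoped-for claims rather than an argument, and you flag this yourself. In particular, claim~(2) --- that after rescaling by a single element of valuation $\alpha$ all coordinates of all points of $V$ reduce to an unramified extension, so that $e=b$ --- is not obviously a consequence of the uniform-minimum hypothesis alone: knowing $\min_i v_\ell(x_i)=\alpha$ for each nonzero point does not control the valuations of the individual $x_i$ above $\alpha$, nor the relations among coordinates of different points. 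The Newton-polytope heuristic in claim~(1) is plausible in spirit but, as you say, the passage from a single power series to a system of $n$ series in $n$ variables is nontrivial. So what you have written is a reasonable outline of where a proof might go, but the substantive content (your ``Main obstacle'') is precisely what is deferred to \cite{Paper1}; you would need to consult that source to see how the formal-group machinery is actually used.
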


This result will allow us to hold in check the action of the wild
inertia group at $\ell$.

\section{Image of the Representation}\label{Image of the
representation}

In the previous section we considered the tameness condition. Our
aim in this section is to obtain some control on the image of the
representation.

Let us fix a prime $\ell$, let $A/\Q$ be an abelian variety of
dimension $n$, and let us denote by $\rho_{\ell}:G_{\Q}\rightarrow
\GL_{2n}(\F_{\ell})$ the Galois representation attached to the
$\ell$-torsion points of $A$. Assume $A$ is principally polarized.
Then the Weil pairing gives rise to a non-degenerated symplectic
form on the group of $\ell$-torsion points of $A$, $\langle \cdot ,
\cdot \rangle: A[\ell]\times A[\ell]\rightarrow \F_{\ell}^*$.
Furthermore, the elements of the Galois group $G_{\Q}$ behave well
with respect to this pairing. This compels the image of the
representation to be contained in the general symplectic group
$\GSp_{2n}(\F_{\ell})$.

Now a well-known result of Serre states that, if the principally
polarized abelian variety has the endomorphism ring equal to $\Z$,
and furthermore its dimension is either odd or equal to $2$ or $6$,
then the image of the representation $\rho_{\ell}$ is the whole
general symplectic group for all but finitely many primes $\ell$
(see Theorem 3 of 137 in \cite{Ouvres}).

We are particularly interested in the case of abelian surfaces over
$\Q$. In this case, the result of Serre boils down to:

\begin{thm} Let $A/\Q$ be an abelian surface, principally polarized,
such that $\End_{\overline{\Q}}(A)=\Z$. Then, for all but finitely
many primes $\ell$, it holds that
\begin{equation*}\mathrm{Im}\rho_{\ell}=\GSp_4(\F_{\ell}).\end{equation*}

\end{thm}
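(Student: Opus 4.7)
\emph{Plan.} The inclusion $\mathrm{Im}\rho_{\ell} \subseteq \GSp_4(\F_{\ell})$ is automatic from the discussion immediately preceding the theorem: the principal polarization provides a Galois-equivariant non-degenerate alternating form on $A[\ell]$ with values in $\F_{\ell}^{*}$, so every element of the image preserves this form up to the cyclotomic multiplier. The real content is surjectivity for all but finitely many $\ell$, which I would establish by contradiction.

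Suppose the image were a proper subgroup of $\GSp_4(\F_{\ell})$ for infinitely many primes $\ell$. Passing to a subsequence, $\mathrm{Im}\rho_{\ell}$ would be contained in a maximal subgroup of a fixed Aschbacher type. One then walks through the types and rules each out using Faltings' theorem, which says that the commutant of the $\ell$-adic Galois representation equals $\End_{\overline{\Q}}(A)\otimes \Z_{\ell}$, hence is just $\Z_{\ell}$ by hypothesis. Reducible, imprimitive, tensor-decomposable and subfield cases all produce either a non-scalar endomorphism of $A$ or a nontrivial isogeny decomposition, both excluded. The normalizer of $\mathrm{GO}_4$ sitting inside $\GSp_4$ would force $A$ to acquire real multiplication, again contradicting $\End_{\overline{\Q}}(A)=\Z$.

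The delicate case is the almost-simple (``exceptional'') maximal subgroups, for instance those coming from $\mathrm{PGL}_2(\F_{\ell})$ acting through a symmetric power. Their orders grow only polynomially in $\ell$, so for $\ell$ large the traces of Frobenius at primes of good reduction of $A$ cannot simultaneously live inside such small groups and satisfy the equidistribution forced by the Weil bounds together with Chebotarev. This is exactly the input of Serre's open image theorem, and rather than redo it I would invoke the cited result (Serre, \emph{\OE uvres}, no.\ 137, Theorem 3), which is formulated precisely for the cases $\dim A$ odd or equal to $2$ or $6$ with $\End_{\overline{\Q}}(A)=\Z$. Finally, Serre's statement is $\ell$-adic; to descend to surjectivity modulo $\ell$ one uses the standard observation that an open subgroup of $\GSp_4(\Z_{\ell})$ which surjects modulo $\ell$ must equal the whole group, combined with the fact that, for $\ell$ large, $\GSp_4(\F_{\ell})$ has no proper subgroup of index below a uniform bound.

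The main obstacle, and the step I would not try to redo, is precisely the exceptional case; it is the hard heart of Serre's theorem. My expository effort would go into checking that the hypotheses ($\dim A=2$, principal polarization, trivial geometric endomorphism ring) are exactly what Serre requires, and into the cleanup that turns an open $\ell$-adic image into a surjective mod $\ell$ image for almost all $\ell$.
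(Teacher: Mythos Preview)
The paper does not prove this theorem at all: it is stated as the $n=2$ specialization of Serre's general result and is simply cited (``see Theorem 3 of 137 in \cite{Ouvres}''). So there is no argument in the paper to compare your sketch against; your outline of the maximal-subgroup analysis already goes well beyond what the paper offers, and since you too defer the hard exceptional case to Serre, the two treatments are in agreement in spirit.

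One small correction to your cleanup step. Your passage from an $\ell$-adic open-image statement to mod $\ell$ surjectivity is phrased backwards: the lifting lemma you quote (a closed subgroup of $\GSp_4(\Z_{\ell})$ that surjects modulo $\ell$ is the whole group, for $\ell\geq 5$) goes the \emph{other} way. Knowing only that the $\ell$-adic image is open does not by itself force surjectivity mod $\ell$. What Serre actually proves is stronger: for all but finitely many $\ell$ the $\ell$-adic image equals $\GSp_4(\Z_{\ell})$ on the nose, and then reduction mod $\ell$ is trivially surjective. So either cite the full-image version of Serre's theorem directly, or drop the cleanup paragraph and invoke Serre's mod $\ell$ statement as the paper does.
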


We will take  the explicit results of P. Le Duff \cite{LeDuff} as
our starting point. We will sketch some of his reasoning in order to
make use of it later on, and then we will dwell upon some specific
points where we need to modify it.

The method of Le Duff rests upon finding a certain set of elements
in the image of the representation which generate the whole
$\GSp_4(\F_{\ell})$. Therefore, at the core of the method lies a
theorem about generators of this group. More specifically, the main
result upon which Le Duff builds his method is the following (see
Theorem 2.7 of \cite{LeDuff}):

\begin{prop}\label{Prop LeDuff} The symplectic group $\Sp_4(\F_{\ell})$ is generated by
a transvection and an element whose characteristic polynomial is
irreducible.
\end{prop}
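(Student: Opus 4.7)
The plan is to apply Clifford theory together with the classification of irreducible symplectic subgroups generated by transvections. Let $\tau$ be the transvection, $\sigma$ the element with irreducible characteristic polynomial $f(x)\in\F_\ell[x]$ of degree $4$, and set $G=\langle\tau,\sigma\rangle\leq\Sp_4(\F_\ell)$. Two structural facts about $\sigma$ will be used throughout: since $f$ is irreducible the space $V=\F_\ell^4$ is a cyclic $\F_\ell[\sigma]$-module with $\F_\ell[\sigma]\simeq\F_{\ell^4}$, so $\sigma$ (and hence $G$) acts irreducibly on $V$; and since $\sigma$ preserves the symplectic form, its eigenvalues $\alpha,\alpha^\ell,\alpha^{\ell^2},\alpha^{\ell^3}\in\F_{\ell^4}^\times$ come in reciprocal pairs, which forces $\alpha^{\ell^2+1}=1$, i.e.\ $\sigma$ lies in a cyclic Coxeter torus of order $\ell^2+1$.

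Next, let $N\triangleleft G$ be the normal closure of $\tau$; by construction $N$ is generated by the conjugates $g\tau g^{-1}$, all of which are transvections. By Clifford's theorem $V|_N$ decomposes into $N$-isotypic components permuted transitively by $G$, so the possibilities for the summand dimensions are $4$, $2+2$, or $1+1+1+1$. To rule out the $1+1+1+1$ case: then $N$ would be abelian, and two transvections with centers $u_1,u_2$ in $\Sp_4$ commute if and only if the symplectic pairing $\langle u_1,u_2\rangle$ vanishes, so all the centers of transvections in $N$ would lie in a totally isotropic subspace of dimension at most $2$; but the centers $\sigma^i(u)$ of the conjugates $\sigma^i\tau\sigma^{-i}$ span a $\sigma$-invariant subspace of $V$, which by irreducibility of $\sigma$ must be the whole of $V$, giving a contradiction. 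To rule out $2+2$: $\sigma$ cannot preserve each summand (that would contradict irreducibility of $\sigma$), so it swaps them; then $\sigma^2$ stabilises each and the characteristic polynomial of $\sigma^2$ on $V$ is a square $g(x)^2$. The eigenvalues of $\sigma^2$ are $\alpha^{2\ell^i}$, so the repeated-root condition forces $\alpha^{2(\ell^k-1)}=1$ for some $k\in\{1,2,3\}$; combined with $\alpha^{\ell^2+1}=1$, a short gcd computation bounds the order of $\alpha$ (by $4$ in each case), which places $\alpha\in\F_{\ell^2}$ and contradicts the irreducibility of $f$ over $\F_\ell$.

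With $N$ shown to act irreducibly on $V$, the subgroup $N\leq\Sp_4(\F_\ell)$ is an irreducible subgroup generated by transvections. Applying McLaughlin's classification of irreducible transvection-generated subgroups (equivalently the Zalesskii--Serezhkin theorem in the symplectic case), the only possibility for $\ell\geq 5$ is $N=\Sp_4(\F_\ell)$; hence $G\supseteq N=\Sp_4(\F_\ell)$, and since $G\leq\Sp_4(\F_\ell)$ equality holds. The main obstacle will be the careful bookkeeping in the $2+2$ Clifford case, where one must extract the precise arithmetic obstruction from the Coxeter-torus constraint on $\sigma$ and show the resulting divisibility conditions are incompatible with $\alpha$ having degree $4$ over $\F_\ell$; invoking the classification of irreducible transvection-generated subgroups is then a direct citation from the literature.
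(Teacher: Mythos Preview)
Your proof is essentially correct and takes a genuinely different route from the paper. The paper does not prove this proposition directly: it is cited as Theorem~2.7 of Le~Duff, and the remark following Theorem~\ref{TresCasos} indicates how it follows from that result---a proper subgroup of $\Sp_4(\F_\ell)$ containing a transvection must stabilize a line, stabilize a totally isotropic plane, or stabilize/swap a pair of orthogonal non-isotropic planes, and one checks that each alternative is incompatible with the presence of an element whose characteristic polynomial is irreducible (your gcd computation in the $2+2$ case is precisely what is needed to dispose of the ``swap'' alternative). Your approach instead passes to the normal closure $N$ of the transvection, uses Clifford theory to analyse $V|_N$, and then invokes McLaughlin's classification of irreducible transvection-generated subgroups. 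The route via Le~Duff is more elementary and specific to $\Sp_4$; yours imports a heavier classification theorem but generalises readily to $\Sp_{2n}$.

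One small gap to patch: a single isotypic component (your ``$4$'' case) does not by itself force $N$ to act irreducibly---$V|_N$ could be isotypic with a $1$- or $2$-dimensional irreducible constituent. Both sub-cases are easily excluded: if the constituent is $1$-dimensional then $N$ acts by scalars, contradicting $\tau\in N$; if it is $2$-dimensional then in a suitable basis every element of $N$ acts as a block-diagonal matrix $\left(\begin{smallmatrix}A & 0\\0 & A\end{smallmatrix}\right)$, so $\tau-\Id$ has even rank, contradicting the transvection condition. With this addition your argument is complete for $\ell\geq 5$, which matches the paper's standing hypothesis.
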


It is easy to see that, if the image of the Galois representation
contains the subgroup $\Sp_4(\F_{\ell})$, then it necessarily
contains the whole group $\GSp_4(\F_{\ell})$, since the composition
of the representation with the projection
$\GSp_4(\F_{\ell})\rightarrow \GSp_4(\F_{\ell})/\Sp_4(\F_{\ell})$
equals the cyclotomic character, which is surjective. Therefore, the
problem boils down to finding two elements in
$\mathrm{Im}\rho_{\ell}$, satisfying that one is a transvection and
that the other has an irreducible characteristic polynomial.

This is the way we approached the matter at first, but in order to
produce an element with irreducible characteristic polynomial in
$\mathrm{Im}\rho_{\ell}$ we needed to make use of a conjecture of
Hardy and Littlewood (namely Conjecture (F) of
\cite{Hardy-Littlewood}). Following a suggestion by L. Dieulefait we
have managed to devise a way to ensure a large image without
resorting to this conjecture.

Let us  consider the following result (Theorem 2.2 of
\cite{LeDuff}).

\begin{thm}\label{TresCasos} Let $G$ be a proper subgroup of $\Sp_4(\F_{\ell})$, and
assume that $G$ contains a transvection. Then one of the following
three assertions holds.
\begin{itemize}
\item[(1)] $G$ stabilizes a hyperplane and a line belonging to it.
\item[(2)] $G$ stabilizes a totally isotropic plane. \item[(3)] The elements
of $G$ stabilize or exchange two orthogonal supplementary
non-totally isotropic planes.
\end{itemize}
\end{thm}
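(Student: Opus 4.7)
The plan is to analyze $G$ through its transvections. Let $\mathcal{T}\subset G$ be the set of (symplectic) transvections contained in $G$; each $t\in\mathcal{T}$ has the form $t_{v,\alpha}\colon x\mapsto x+\alpha\langle x,v\rangle v$ for some nonzero $v\in V:=\F_\ell^4$ and $\alpha\in\F_\ell^*$. Such a $t$ fixes the hyperplane $v^\perp$ pointwise and has direction line $\langle v\rangle$, which lies in $v^\perp$ since the form is alternating. Let $\Sigma\subset\mathbb{P}(V)$ be the set of direction lines arising from elements of $\mathcal{T}$, and set $W:=\mathrm{span}\,\Sigma\subseteq V$. Since conjugation by $g\in G$ sends $t_{v,\alpha}$ to $t_{gv,\alpha}$, the group $G$ permutes $\Sigma$ and hence stabilizes $W$.

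I would then dispose of the cases $\dim W<4$ by direct geometric arguments. If $\dim W=1$, the line $W$ and the hyperplane $W^\perp\supseteq W$ are both $G$-stable, giving conclusion~(1). If $\dim W=3$, the hyperplane $W$ is $G$-stable, and the radical $W\cap W^\perp$ of the restricted symplectic form is a line (since $\dim W$ is odd) which is intrinsically defined, hence $G$-stable, again yielding conclusion~(1). If $\dim W=2$ and $W$ is totally isotropic, we are in conclusion~(2). If $\dim W=2$ and $W$ is non-degenerate, then $V=W\oplus W^\perp$ is an orthogonal decomposition into two non-isotropic planes, both $G$-stable because $W$ contains every transvection direction while $W^\perp$ contains none; this is a special case of conclusion~(3) in which no exchange occurs.

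The remaining, and main, case is $W=V$. Here I would argue that the normal subgroup $N:=\langle\mathcal{T}\rangle$ of $G$ already equals $\Sp_4(\F_\ell)$, contradicting the hypothesis that $G$ is proper. The key ingredient is (a low-rank instance of) McLaughlin's classification of irreducible linear groups generated by transvections: an irreducible subgroup of $\Sp(V)$ generated by symplectic transvections must be $\Sp(V)$ itself. Explicitly, one picks $t_v,t_w\in\mathcal{T}$ with $\langle v,w\rangle\neq 0$, so that the plane $P=\langle v,w\rangle$ is non-degenerate and $\langle t_v,t_w\rangle=\Sp(P)\cong\SL_2(\F_\ell)$ acts trivially on $P^\perp$; letting $v,w$ range over $\Sigma$ one assembles enough such $\SL_2$-blocks, on enough non-degenerate planes, to recover $\Sp_4(\F_\ell)$. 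The \emph{main obstacle} of the proof is precisely this generation step: one must rule out that, despite $\Sigma$ spanning $V$, the $N$-orbits on $\Sigma$ respect a further invariant decomposition of $V$. Should such a decomposition exist, it would have to consist either of an invariant totally isotropic plane, or of a pair of orthogonal non-isotropic planes (now possibly swapped by $G$, since we are no longer in the tidy $\dim W=2$ situation), thereby folding back to conclusion~(2) or the ``exchange'' form of conclusion~(3); otherwise, $N$ is irreducible and McLaughlin delivers the required contradiction.
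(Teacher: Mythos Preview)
The paper does not prove this theorem: it is quoted as Theorem~2.2 of Le~Duff~\cite{LeDuff} and used as a black box, so there is no in-paper argument to compare against.

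Your outline is nonetheless the right shape and close in spirit to the classical argument that Le~Duff himself invokes. The cases $\dim W\le 3$ are handled cleanly. Where your sketch thins out is the case $W=V$. Two points deserve tightening. First, you should record explicitly why $N=\langle\mathcal{T}\rangle$ cannot stabilize a line or a hyperplane when $\mathrm{span}\,\Sigma=V$: a transvection $t_v$ fixes a line $\langle u\rangle$ only if $\langle u,v\rangle=0$, so any $N$-fixed line lies in $W^\perp=0$; dually, the image of $t_v-\mathrm{Id}$ is $\langle v\rangle$, so any $N$-invariant hyperplane contains $W=V$. This forces any proper $N$-invariant subspace to be a plane. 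Second, and more substantively, the passage from an $N$-invariant plane to a $G$-invariant plane (or a $G$-stable pair of orthogonal complements) is not automatic. Normality of $N$ in $G$ tells you that $G$ permutes the set of $N$-invariant planes, but you still have to argue that this set consists either of a single totally isotropic plane or of exactly the pair $\{P,P^\perp\}$ with $P$ non-degenerate; ruling out, say, several $N$-invariant totally isotropic planes in general position requires using again that $N$ is generated by transvections whose directions span $V$. This is the step you flag as the ``main obstacle'' but do not actually carry out, and it is where the real work lies.
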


\begin{rem}If $G$ is a subgroup of $\Sp_4(\F_{\ell})$ which contains an element with
irreducible characteristic polynomial, it cannot satisfy any of the
three assertions of the theorem (see Theorem 2.7 of \cite{LeDuff}).
Therefore Proposition \ref{Prop LeDuff} is an easy consequence of
this result.
\end{rem}

Our strategy will be the following: for each of the three
assertions, we shall ensure the existence of an element of
$G=\mathrm{Im}\rho_{\ell}$, contained in $\Sp_4(\F_{\ell})$, which
does not satisfy it. In this way, we will prove that $G$ cannot be a
proper subgroup of $\Sp_4(\F_{\ell})$. Instead of asking directly
that there exists an element with irreducible characteristic
polynomial, which rules out the three possibilities at once, we will
require that there are elements such that the corresponding
characteristic polynomial decomposes in different ways.

\begin{rem}\label{SubgruposMaximales} The second assertion in the
above theorem occurs when $G$ is contained in a maximal parabolic
subgroup. In this case, it is easy to check that, choosing a
suitable symplectic basis, this maximal subgroup consists of
matrices of the form $\begin{pmatrix}A & * \\ 0 &
(A^{-1})^t\end{pmatrix}$, where $(A^{-1})^t$ denotes the transpose
of the inverse matrix of $A$ (cf. the remark following the proof of
Theorem 2.2 in \cite{LeDuff}). On the other hand, if the third
assertion holds, then the elements of $G$ leave two supplementary
orthogonal non-totally isotropic planes stable, or else interchange
them (this is case (3) of Proposition 2 of \cite{Kawamura}). If an
element of $\Sp_4(\F_{\ell})$ interchanges two such planes, then it
can be seen that its trace is zero. Therefore an element which
belongs to this kind of maximal subgroup either has trace $0$ or
stabilizes two planes. Moreover, if it stabilizes two planes, it can
be expressed as $\begin{pmatrix}A & *\\ 0 & B\end{pmatrix}$ with
respect to some suitable basis, where $A$ and $B$ belong to
$\SL_2(\F_{\ell})$.
\end{rem}

Let us consider an element in $\Sp_4(\F_{\ell})$, and call its
characteristic polynomial $P(X)$. It is easy to see that this
polynomial can be written as $P(X)=X^4 + aX^3 + bX^2 + aX + 1$ for
some $a, b\in \F_{\ell}$. In turn, this implies that there exist
$\alpha, \beta\in \overline{\F}_{\ell}$ such that $P(X)$ decomposes
as $(X-\alpha)(X-\beta)(X-1/\alpha)(X-1/\beta)$ over
$\overline{\F}_{\ell}$ (for $P(\alpha)=0$ implies that
$P(1/\alpha)=0$).

Now there are essentially two ways in which such a polynomial can
break up in quadratic factors, namely
\begin{equation*}P(X)=\begin{cases}\Bigl((X-\alpha)(X-1/\alpha)\Bigr)\cdot
\Bigl((X-\beta)(X-1/\beta)\Bigr)\\
\Bigl((X-\alpha)(X-\beta)\Bigr)\cdot
\Bigl((X-1/\alpha)(X-1/\beta)\Bigr).\end{cases}\end{equation*}

The first case is labelled \emph{``unrelated'' $2$-dimensional
constituents} and the second one \emph{``related'' $2$-dimensional
constituents} in \cite{Dieulefait-Explicit determination}.

There is a nice way to discern whether the first decomposition takes
place. Namely, consider the polynomial $P_0(X)=X^2 + aX + (b-2)$.
The roots of this polynomial are precisely $\alpha + 1/\alpha, \beta
+ 1/\beta$. Therefore, if the first factorization occurs, this
polynomial is reducible and its discriminant $\Delta_0=a^2- 4b+8$ is
a square in $\F_{\ell}$. To determine whether the other
factorization takes place is more difficult. Given an element of
$\Sp_4(\F_{\ell})$ with characteristic polynomial $P(X)=X^4 + aX^3 +
bX^2 + aX + 1$, we will denote $\Delta_0(P)=a^2-4b+8$.

\begin{thm}\label{TeoremaModificado} Let $G$ be a  subgroup of $\Sp_4(\F_{\ell})$, and
assume that $G$ contains a transvection. Furthermore, assume that it
contains two elements whose characteristic polynomials, $P_1(X)$ and
$P_2(X)$, satisfy the following: denoting by $\alpha_i, 1/\alpha_i,
\beta_i, 1/\beta_i$ the four roots of $P_i(X)$, $i=1, 2$,
\begin{itemize}
\item $\alpha_1 + 1/\alpha_1, \beta_1 + 1/\beta_1\not\in
\F_{\ell}$ and $\alpha_1 + 1/\alpha_1 + \beta_1 + 1/\beta_1\not=0$.
\item $\alpha_2 + 1/\alpha_2, \beta_2 + 1/\beta_2\in \F_{\ell}$,
$\Delta_0(P_2)\not=0$ and $\alpha_2\not\in \F_{\ell}$.

\end{itemize}
Then $G$ equals $\Sp_4(\F_{\ell})$.
\end{thm}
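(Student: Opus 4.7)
The plan is to argue by contradiction using Theorem \ref{TresCasos}. If $G$ were a proper subgroup of $\Sp_4(\F_\ell)$, then since it contains a transvection, one of the three structural conclusions of that theorem would hold. I will use the first given element to rule out cases (1) and (3), and the second to rule out case (2).

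In case (1), the stabilized line yields an $\F_\ell$-rational eigenvalue for every element of $G$, but the hypothesis $\alpha_1 + 1/\alpha_1 \not\in \F_\ell$ already forces $\alpha_1 \not\in \F_\ell$ (hence $1/\alpha_1 \not\in \F_\ell$), and the analogous statement holds for $\beta_1$, so the first element has no $\F_\ell$-rational eigenvalue. In case (3), Remark \ref{SubgruposMaximales} shows that each element of $G$ either has trace zero or takes the form $\begin{pmatrix} A & * \\ 0 & B \end{pmatrix}$ with $A, B \in \SL_2(\F_\ell)$ in some suitable basis. The first element has trace $-(\alpha_1 + 1/\alpha_1 + \beta_1 + 1/\beta_1) \neq 0$, so it must be of this upper block-triangular shape; then its characteristic polynomial factors in $\F_\ell[X]$ as $\chi_A \cdot \chi_B$ with each factor having constant term one, hence with reciprocal roots. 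Since the only reciprocal pairs among $\{\alpha_1, 1/\alpha_1, \beta_1, 1/\beta_1\}$ are $\{\alpha_1, 1/\alpha_1\}$ and $\{\beta_1, 1/\beta_1\}$, at least one of $\alpha_1 + 1/\alpha_1$ and $\beta_1 + 1/\beta_1$ would have to lie in $\F_\ell$, contradicting the hypothesis.

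Case (2) is the main obstacle. Every element of $G$ takes the form $\begin{pmatrix} A & * \\ 0 & (A^{-1})^t \end{pmatrix}$, so $P_2 = \chi_A \cdot \chi_{A^{-1}}$ with both factors in $\F_\ell[X]$; equivalently, the four roots of $P_2$ partition into two subsets $\{r, s\}$ and $\{1/r, 1/s\}$, each stable under the Frobenius $\Frob : x \mapsto x^\ell$. The hypotheses $\alpha_2 + 1/\alpha_2 \in \F_\ell$ and $\alpha_2 \not\in \F_\ell$ pin down $\Frob(\alpha_2) = 1/\alpha_2$. I will enumerate the three partitions of $\{\alpha_2, 1/\alpha_2, \beta_2, 1/\beta_2\}$ into two pairs. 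The self-reciprocal partition $\{\alpha_2, 1/\alpha_2\} \mid \{\beta_2, 1/\beta_2\}$ forces $\chi_A = \chi_{A^{-1}}$, hence $P_2 = \chi_A^2$; this forces $\beta_2 \in \{\alpha_2, 1/\alpha_2\}$, making $\Delta_0(P_2) = 0$. For the crossed partition $\{\alpha_2, \beta_2\} \mid \{1/\alpha_2, 1/\beta_2\}$, $\Frob$-stability of the first part together with $\Frob(\alpha_2) = 1/\alpha_2$ requires $1/\alpha_2 \in \{\alpha_2, \beta_2\}$, i.e., either $\alpha_2 \in \F_\ell$ or $\beta_2 = 1/\alpha_2$; both conclusions contradict the hypotheses (the second giving $\Delta_0(P_2) = 0$). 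The remaining partition $\{\alpha_2, 1/\beta_2\} \mid \{1/\alpha_2, \beta_2\}$ is analogous and forces $\alpha_2 = \beta_2$, again giving $\Delta_0(P_2) = 0$. Having excluded all three cases of Theorem \ref{TresCasos}, I conclude that $G = \Sp_4(\F_\ell)$.
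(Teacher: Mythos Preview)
Your proof is correct and follows the same route as the paper: invoke Theorem~\ref{TresCasos} and use the element with characteristic polynomial $P_1$ to exclude cases~(1) and~(3), and the one with $P_2$ to exclude case~(2). Your handling of case~(2) via the Frobenius action on the possible root partitions is a cosmetic variant of the paper's GCD argument (where, after relabelling so that $\{\alpha_2,\beta_2\}$ are the roots of one quadratic factor, one takes $\gcd\bigl((X-\alpha_2)(X-1/\alpha_2),(X-\alpha_2)(X-\beta_2)\bigr)$); note also that the trace of the first element is $\alpha_1+1/\alpha_1+\beta_1+1/\beta_1$ rather than its negative, though this sign slip is harmless.
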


\begin{proof}
Since $G$ contains a transvection, Theorem \ref{TresCasos} implies
that either $G$ is the whole symplectic group or else one of the
assertions (1), (2) or (3) of the theorem holds. We will see that,
in fact, none of them is satisfied.

If assertion (1) holds, then all the elements of $G$ must leave a
line invariant. But this implies that each element has one
eigenvalue which belongs to  $\F_{\ell}$. But $\alpha_1 +
1/\alpha_1$ and $\beta_1 + 1/\beta_1$ do not belong to $\F_{\ell}$.
Therefore assertion (1) does not hold.

Assume now that assertion (2) holds. Then $G$ is contained in a
group which stabilizes a totally isotropic plane. Therefore, with
respect to a suitable symplectic basis, it is contained in a
subgroup of the shape $\begin{pmatrix}A & *\\
0 &(A^{-1})^t\end{pmatrix}$ (see  Remark \ref{SubgruposMaximales}).
In particular, this implies that if $P(X)$ is the characteristic
polynomial of an element of $G$, it must factor over $\F_{\ell}$
into two polynomials of degree two. Call  the roots of one of the
factors $\alpha$ and $\beta$. Then the roots of the other factor are
$1/\alpha$ and $1/\beta$.

Let us consider the polynomial
$P_2(X)=(x-\alpha_2)(x-\beta_2)(x-1/\alpha_2)(x-1/\beta_2)$.
Labelling the roots anew if necessary, we can assume that $\alpha_2$
and $\beta_2$ are the roots of one of the quadratic factors, as
above. We can consider two cases:
\begin{itemize}
\item $\beta_2=1/\alpha_2$ or $\beta_2=\alpha_2$. In this case
$P(X)$ can be factored as $P(X)=(X^2 - AX + 1)^2$ for a certain
$A\in \F_{\ell}$. If we work out this expression, we obtain
$P(X)=X^4 + 2AX^3 + (A^2 + 2)X^2 + 2AX + 1$. Writing out $P(X)=X^4 +
a_2X^3 + b_2X^2 + a_2X + 1$ and comparing these two expressions we
obtain that $\Delta_0(P_2)=0$, which contradicts our hypotheses on
$P_2(X)$.

\item $\beta_2\not=\alpha_2, 1/\alpha_2$. In this case, the
polynomials $(X-\alpha_2)(X-1/\alpha_2)$ and
$(X-\alpha_2)(X-\beta_2)$ are different, and both are defined over
$\F_{\ell}$. Therefore, one can use the Euclid algorithm to compute
their greatest common divisor $(X-\alpha_2)$. This implies that
$\alpha_2\in \F_{\ell}$, which contradicts our hypotheses on
$P_2(X)$.
\end{itemize}

Finally, assume that Assertion (3) holds. Then any element in $G$
satisfies that either its trace is zero or it stabilizes two planes,
which are supplementary, orthogonal and are not totally isotropic
(see Remark \ref{SubgruposMaximales}). Consider again the element
with characteristic polynomial $P_1(X)$. Since it has non-zero
trace, it must stabilize two such planes. But then $P_1(X)$ should
break into two quadratic factors defined over $\F_{\ell}$. Moreover,
since the determinant of the corresponding matrix is $1$ for each of
the factors (cf. Remark \ref{SubgruposMaximales}), their independent
terms must be $1$. But this means that the factors have to be
$(X-\alpha_1)(X-1/\alpha_1)$ and $(X-\beta_1)(X-1/\beta_1)$, and we
know these polynomials are not defined over $\F_{\ell}$. Therefore
Assertion (3) cannot hold.
\end{proof}

\section{Explicit Construction}\label{Explicit}

In this section we will face the problem of constructing explicitly,
for a given prime number $\ell$, a genus $2$ curve such that the
Jacobian variety attached to it gives rise to a Galois
representation yielding a finite Galois extension $K/\Q$, tamely
ramified, with Galois group $\GSp_4(\F_{\ell})$, thus providing an
affirmative answer to the Tame Inverse Galois Problem for this
group. In the preceding sections we have worked out some statements
that give very accurate and explicit conditions for the Galois
representation attached to the $\ell$-torsion points of an abelian
surface to satisfy the desired properties. Our aim in this section
is to replace all these conditions by others, which are more
restrictive, but are simply congruences modulo powers of different
primes. We shall tackle each of the conditions separately; thus the
section will be split in several different subsections. In the last
section we shall state a theorem possessing a very explicit flavour.

\subsection{Explicit control of the ramification}\label{Section good
reduction}

Assume we have a hyperelliptic curve $C$ of genus $g$ defined over a
certain field $k$ by an equation of the shape $y^2 + Q(x)\cdot y=
P(x)$, where $P(X)$ has degree $2g+2$. The  \emph{discriminant} of
this equation is defined  as
\begin{equation*}\Delta=2^{-4(g+1)}\cdot \disc(4P(x)+Q(x)^2).\end{equation*}
It holds that if $\Delta\not=0$, the curve $C$ is smooth (see
\cite{Liu96}, $\S$ 2).

In fact, if we fix a prime $p$, the $p$-adic valuation of the
discriminant is a bound of the conductor exponent of $C$, which in
turn coincides with the conductor exponent of the Jacobian variety
attached to $C$.

Consider the genus $2$ curve defined over $\Q$ by the hyperelliptic
equation
\begin{equation*}y^2=x^6 + 1.\end{equation*}
The discriminant of this equation is $\Delta=-2^6\cdot 3^6$.
Therefore, $\Delta\not\equiv 0 \pmod p$, for $p\not=2, 3$. Now, if
we have a genus $2$ curve $C$ defined by a hyperelliptic equation
\begin{equation}\label{eq curva}y^2=f(x),\end{equation} where $f(x)\in \Z[x]$ is a
polynomial of degree $6$ such that $f(x)\equiv x^6 + 1$ modulo a
prime $p>3$, then the prime $p$ cannot divide the discriminant of
Equation \eqref{eq curva}, thus $C$ has good reduction at $p$. In
this way, we obtain a condition that we can ask a curve to satisfy
if we want it to have good reduction at a given prime $p\not=2, 3$.
For the primes $2$ and $3$ one has to require other conditions. The
following propositions provide these conditions.

\begin{prop}\label{ReductionAt3} Let $C$ be a genus $2$ curve given by the hyperelliptic equation
\begin{equation*} y^2=f(x),\end{equation*} where $f(x)=f_6 x^6 + f_5 x^5 + f_4 x^4 + f_3 x^3
+ f_2 x^2 + f_1 x + f_0\in \Z[x]$ satisfies:
\begin{equation*}\begin{cases}f_0\equiv f_6\equiv 1\pmod
3\\
         f_1\equiv f_5\equiv 0\pmod 3\end{cases}
         \begin{cases}f_2\equiv f_4\equiv 1\pmod 3\\
         f_3\equiv 0 \pmod 3.\end{cases}\end{equation*}

Then $C$ has good reduction at $p=3$.
\end{prop}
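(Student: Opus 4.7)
The plan is to reduce the hyperelliptic equation modulo $3$ and verify that the reduced polynomial is separable; by the discriminant criterion recalled at the beginning of Section~\ref{Section good reduction} this will give good reduction. The prescribed congruences immediately collapse $f$ to
\[ f(x) \equiv x^6 + x^4 + x^2 + 1 \pmod{3}, \]
so the entire question is reduced to analysing this single polynomial over $\F_3$.

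I would then factor it explicitly. Grouping terms gives $x^6 + x^4 + x^2 + 1 = (x^2+1)(x^4+1)$, and $x^2+1$ is irreducible over $\F_3$ because $-1$ is a non-square modulo $3$. A direct check shows that $x^4+1$ has no root in $\F_3$, so it splits into a product of two irreducible quadratics; solving the obvious system for their coefficients yields $x^4+1 = (x^2+x+2)(x^2+2x+2)$ in $\F_3[x]$. Hence $\bar f$ is a product of three pairwise distinct irreducible quadratics, in particular separable, with nonzero leading coefficient $\bar f_6 = 1$.

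It follows that $\disc(\bar f) \in \F_3^\times$. Since the discriminant $\Delta$ of the hyperelliptic equation (taken with $Q = 0$ and $g = 2$) differs from $\disc(f)$ only by a factor which is a power of $2$, it is also a unit modulo $3$. The criterion of \cite{Liu96}, \S 2, invoked at the start of the subsection, then forces the integral model $y^2 = f(x)$ to have smooth reduction at $3$, and therefore $C$ has good reduction at $p=3$. The only step that is not essentially automatic is the explicit factorisation of $\bar f$ in $\F_3[x]$; no complications arise from small primes since we are in odd residue characteristic, which is precisely why the analogous argument at $p=2$ requires a separate statement.
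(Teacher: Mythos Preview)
Your proof is correct and follows the same overall strategy as the paper: both reduce $f$ modulo $3$ to the fixed polynomial $x^6+x^4+x^2+1$ and then verify that this polynomial has nonvanishing discriminant over $\F_3$, which forces good reduction by the criterion recalled at the start of the subsection.

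The only difference is in how that verification is carried out. The paper simply computes the discriminant of the model $y^2=x^6+x^4+x^2+1$ numerically, obtaining $\Delta=-4194304\equiv 2\pmod 3$, and is done. You instead factor $\bar f=(x^2+1)(x^2+x+2)(x^2+2x+2)$ explicitly in $\F_3[x]$ and conclude separability from the three factors being pairwise coprime. Your argument is self-contained and requires no machine computation, at the cost of a short case analysis; the paper's version is a one-line appeal to a computed integer. Neither approach offers any real advantage over the other for a statement this small, and your factorisation is correct.
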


\begin{proof} The  hyperelliptic
equation $y^2=x^6 + x^4 + x^2 + 1$ has discriminant $\Delta=
-4194304 \equiv 2\quad \pmod 3$. Because of the congruence
conditions on the coefficients $f_0, \dots, f_6$, it is clear that
the discriminant of the hyperelliptic equation defining $C$ is
congruent with $\Delta$ modulo 3, thus it is not divisible by $3$.
\end{proof}

\begin{prop}\label{ReductionAt2} Let $C$ be a genus
$2$ curve given by the hyperelliptic equation
\begin{equation}\label{eq prop} y^2=f(x),\end{equation} where
$f(x)=f_6 x^6 + f_5 x^5 + f_4 x^4 + f_3 x^3 + f_2 x^2 + f_1 x +
f_0\in \Z[x]$ satisfies that:
\begin{equation*}\begin{cases}f_0\equiv f_6\equiv 1\pmod{16}\\
         f_1\equiv f_5\equiv 0 \pmod{16}\end{cases}
         \begin{cases}f_2\equiv f_4\equiv 4 \pmod{16}\\
         f_3\equiv 2\pmod{16}.\end{cases}\end{equation*}

Then the Jacobian surface attached to $C$ has either good reduction
or semistable reduction at $p=2$.
\end{prop}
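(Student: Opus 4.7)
The strategy is to pass from the raw model $y^2=f(x)$, whose mod--$2$ reduction $y^2=(x^3+1)^2$ is badly singular, to a Liu-style integral Weierstrass model $z^2+Q(x)z=P(x)$ over $\Z_2$ that admits a regular proper lift whose geometric special fibre has only ordinary double points. Once such a model is in hand, the Jacobian of $C$ inherits semistable reduction from $C$ by the standard theorem relating stable curves to semistable abelian varieties (Deligne--Mumford, SGA~7).

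Concretely, set $g(x):=x^3+1$ and $h(x):=(f(x)-g(x)^2)/4$. The hypotheses force
\begin{equation*}
f(x)\equiv x^6+4x^4+2x^3+4x^2+1\equiv g(x)^2+4(x^4+x^2)\pmod{16},
\end{equation*}
so $f(x)\equiv g(x)^2\pmod 4$, which makes $h(x)\in\Z[x]$ with $h(x)\equiv x^4+x^2\pmod 4$. The substitution $y=g(x)+2z$ identifies $C$ with the integral model
\begin{equation*}
\mathcal{C}\colon\; z^2+(x^3+1)z=h(x).
\end{equation*}
Reducing modulo $2$ gives $z^2+(x^3+1)z=x^2(x+1)^2$ over $\F_2$; the partial-derivative computation ($\partial_z=x^3+1$, $\partial_x=x^2z$ in characteristic two) shows that the only affine singular point of the special fibre is $(x,z)=(1,0)$, while the chart at infinity produces two distinct smooth points. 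Translating $u=x-1$ at the singularity, the lowest-order terms become $z^2+uz+u^2$, which factors as $(z+\omega u)(z+\omega^2 u)$ over $\F_4$, so the singularity is an ordinary node.

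The main step, and the one where the full strength of the mod--$16$ hypothesis enters, is to verify that the total space $\mathcal{C}$ is regular at this node---it is this, not merely the nodal shape of the special fibre, that upgrades the picture to an actual semistable reduction. In the local ring $A=\Z_2[u,z]_{(2,u,z)}$ with $\fm=(2,u,z)$ one has $\fm^2=(4,2u,2z,u^2,uz,z^2)$. Writing $F(u,z)=z^2+g(u+1)z-h(u+1)$ and expanding, every term of $F$ except the constant $-h(1)$ lies in $\fm^2$: $g(u+1)z=(2+3u+3u^2+u^3)z\in\fm^2$, and $h'(1)u\in\fm^2$ because $h(x)\equiv x^4+x^2\pmod 4$ forces $h'(1)\equiv 2\pmod 4$. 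Hence $F\equiv-h(1)\pmod{\fm^2}$, and the remaining task is to show $v_2(h(1))=1$. Summing the coefficient congruences yields $f(1)\equiv 1+0+4+2+4+0+1\equiv 12\pmod{16}$, so $4h(1)=f(1)-g(1)^2\equiv 8\pmod{16}$ and indeed $v_2(h(1))=1$; therefore $\bar F\neq 0$ in $\fm/\fm^2$ and $A/(F)$ is a regular two-dimensional local ring. The congruences mod $4$ would only give integrality of $h$; the sharper hypotheses mod $16$ are needed precisely to pin down $v_2(h(1))=1$, which is what powers the regularity check---and hence the whole argument---at the nodal point.
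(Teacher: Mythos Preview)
Your proof is correct, and it starts the same way as the paper's: both perform the substitution $y=(x^3+1)+2z$ to pass from $y^2=f(x)$ to the integral model $z^2+(x^3+1)z=h(x)$ with $h\equiv x^4+x^2\pmod 4$. From this common point the two arguments diverge. The paper simply observes that the discriminant of the new model is congruent modulo $4$ to that of $z^2+(x^3+1)z=x^4+x^2$, which one computes to have $2$-adic valuation exactly $1$; invoking Liu's bound of the conductor exponent by the valuation of the discriminant then gives conductor exponent at most $1$, hence good or semistable reduction. You instead carry out a direct geometric analysis: you identify the unique singular point of the special fibre, verify it is an ordinary node, and then check regularity of the arithmetic surface there via the computation $v_2(h(1))=1$. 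Your route is longer but more self-contained, since it avoids the discriminant--conductor inequality and works straight from the definition of a semistable model; the paper's route is a two-line invariant computation but leans on a cited result. Both use the mod-$16$ hypothesis in essentially the same way---to pin down the coefficients of $h$ modulo $4$---though you make the role of $v_2(h(1))=1$ in the regularity check pleasantly explicit.
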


\begin{proof} Let us consider the following change of variables
\begin{equation*}\begin{cases}x:= x\\
         y:=x^3 + 2y + 1.\end{cases}\end{equation*}

Applying it to Equation \eqref{eq prop}, we obtain an equation with
integer coefficients, which are congruent to those of $ y + x^3y +
y^2=x^4 + x^2 $ modulo $4$. But the discriminant of this equation
(modulo 4) is $\Delta=2$, therefore it is divisible by $2$, and once
only. Since the $2$-adic valuation of the discriminant bounds the
conductor exponent at $2$, this ensures that the Jacobian surface
attached to $C$ has either good reduction or bad semistable
reduction at $2$.
\end{proof}

In this way we can construct an abelian surface $J(C)$ with
semistable reduction at any finite set of primes $p\not=\ell$.
Concerning the prime $\ell$, in \cite{Paper1} the author gives a
very explicit condition to ensure that the wild inertia group at
$\ell$ acts trivially on the $\ell$-torsion points of the Jacobian
of a genus $2$ curve. We recall this result here.

\begin{thm}\label{Reducible} Let $\ell$ be a prime number, let $\overline{a}\in \F_{\ell}$ such that $x^2-x+\overline{a}$ divides the Deuring polynomial $H_{\ell}(x)$, and lift it to $a\in \Z$. The equation
\begin{equation*}y^2=x^6 + \frac{1-a}{a}x^4+ \frac{1-a}{a}x^2 + 1\end{equation*}
defines a genus $2$ curve such that its Jacobian variety satisfies
the hypothesis of Theorem \ref{InerciaEnl}.
\end{thm}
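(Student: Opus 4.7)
The plan is to verify, for $A = J(C)$, the two hypotheses of Theorem~\ref{InerciaEnl}: good supersingular reduction at $\ell$, and a uniform $\ell$-adic valuation on the nonzero $\ell$-torsion points of the formal group. I will exploit the rich automorphism group of $C$ to show that $J(C)$ is isogenous to $E^2$ for a single elliptic curve $E$, then identify $E$ as a Legendre curve whose parameter is a root of $H_\ell$.

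First, $C$ has good reduction at $\ell$: writing $f(x) = x^6+bx^4+bx^2+1 = g(x^2)$ with $b = (1-a)/a$ and $g(u) = (u+1)(u^2+(b-1)u+1)$, one computes $\disc(g) = (b-3)^3(b+1)$, so the sextic has no repeated roots modulo $\ell$ provided $\overline{a} \not\equiv 0, 1/4 \pmod{\ell}$. Since the roots of $H_\ell$ are simple and include neither $0$ nor $1$, both degenerations are ruled out by the hypothesis.

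Next, $C$ admits the two automorphisms $\sigma(x,y) = (-x,y)$ and $\tau(x,y) = (1/x,y/x^3)$, whose pullbacks on the basis $\omega_1 = dx/y$, $\omega_2 = x\,dx/y$ of $H^0(C, \Omega^1)$ act respectively by $\mathrm{diag}(-1,1)$ and $\bigl(\begin{smallmatrix}0 & -1\\ -1 & 0\end{smallmatrix}\bigr)$. One checks that $\sigma^{*2}=\tau^{*2}=1$ and $\sigma^*\tau^*+\tau^*\sigma^*=0$, embedding $M_2(\Z)\hookrightarrow \End(J(C))$. Since $M_2(\Q)$ is not a division algebra, Poincar\'e complete reducibility forces $J(C)\sim E^2$ for some elliptic curve $E$, and the elliptic quotient $E_+ := C/\langle\sigma\rangle$ (with Weierstrass form $y^2 = u^3+bu^2+bu+1$, $u=x^2$) lies in the isogeny class of $E$. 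A direct depression-of-the-cubic computation gives $j(E_+) = 256b^3/(b+1) = 256(1-a)^3/a^2$. If $r$ is a root of $x^2-x+a$, so that $r^2-r = -a$, then $256(r^2-r+1)^3/(r(r-1))^2 = 256(1-a)^3/a^2$, meaning $j(E_+)$ equals the $j$-invariant of the Legendre curve $y^2 = x(x-1)(x-r)$. By the defining property of the Deuring polynomial, this Legendre curve is supersingular modulo $\ell$ precisely when $H_\ell(r)\equiv 0\pmod{\ell}$, which is exactly our hypothesis. Hence $E_+$, and so $J(C)\sim E_+^2$, has good supersingular reduction at $\ell$.

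For the valuation hypothesis, the formal group $\hat E_+$ over $\Z_\ell^{\unr}$ is a Lubin--Tate formal group of height $2$, whose nonzero $\ell$-torsion points in $\overline{\Q}_\ell$ all have $\ell$-adic valuation $1/(\ell^2-1)$. Since $\hat J(C)$ is isogenous over an unramified extension of $\Q_\ell$ to $\hat E_+\times\hat E_+$, any nonzero $\ell$-torsion pair $(x_1,x_2)$ has at least one coordinate of valuation $1/(\ell^2-1)$, so $\min\{v_\ell(x_1),v_\ell(x_2)\} = 1/(\ell^2-1)$ uniformly---this is the required $\alpha$. The main obstacle I expect is this last step: an isogeny of abelian varieties need not induce an isomorphism of formal groups, and an isogeny of degree divisible by $\ell$ can shift torsion valuations. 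The cleanest fix is to compute the formal group law of $J(C)$ directly from its Weierstrass model using the supersingular Deuring normal form for $E_+$, the approach of the author's prior paper \cite{Paper1} underlying Theorem~\ref{InerciaEnl}.
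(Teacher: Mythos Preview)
The paper does not prove this statement; it is quoted verbatim from \cite{Paper1}, where (as both the surrounding text and you yourself anticipate in your final paragraph) the argument is a direct computation with the formal group law of $J(C)$. So there is no in-paper proof to compare against, and your route---splitting $J(C)$ up to isogeny and reading off supersingularity from a Legendre $j$-invariant---is genuinely different from the approach underlying the citation.

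Your argument for good supersingular reduction is correct and more conceptual than a direct computation. Two small comments: the $\Z$-span of $I,\sigma^*,\tau^*,\sigma^*\tau^*$ is an index-$4$ order in $M_2(\Z)$, not all of $M_2(\Z)$, but you only need $M_2(\Q)\subset\End(J(C))\otimes\Q$, which follows; and in the discriminant step the case $b\equiv -1$ is vacuous, since $b+1=1/a$ is never zero.

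The gap you flag at the end is real, but it can be closed without reverting to the direct formal-group computation of \cite{Paper1}. The isogeny you need can be made explicit with $2$-power degree. Writing $\iota$ for the hyperelliptic involution, the second quotient $E_-:=C/\langle\sigma\iota\rangle$ is the curve $w^2=u\,g(u)$ via $(u,w)=(x^2,xy)$; the substitution $u=1/s$, $w=t/s^2$ gives $t^2=s^3+bs^2+bs+1$, so in fact $E_-\cong E_+$. One checks $\pi_{+*}\pi_+^{*}=[2]$ and $\pi_{+*}\pi_-^{*}=0$ (the two preimages under $\pi_-$ map to inverse points on $E_+$), so $(\pi_{+*},\pi_{-*})\circ(\pi_+^{*}+\pi_-^{*})=[2]\times[2]$; hence the isogeny $E_+\times E_-\to J(C)$ has degree dividing $16$. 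For $\ell>2$ this induces an isomorphism of $\ell$-divisible groups over $\Z_\ell^{\unr}$, hence of formal groups in the supersingular case. Finally, the quantity $\min_i v_\ell(x_i)$ is invariant under a formal-group isomorphism over $\Z_\ell^{\unr}$: the linear part lies in $\GL_n(\Z_\ell^{\unr})$ and preserves the minimum valuation, while the higher-order terms have valuation at least $2\alpha$ and cannot interfere. Thus the uniform value $\alpha=1/(\ell^2-1)$ you found for $\widehat{E}_+\times\widehat{E}_+$ transfers to the formal group of $J(C)$, and Theorem~\ref{InerciaEnl} applies.
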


Moreover, in Theorem 6.4 of \cite{Paper1} this result is expanded to
cover a larger family of curves.

\begin{thm}\label{Enlargement} Let $\ell$ be a prime number, let $\overline{a}\in \F_{\ell}$ such that $x^2-x+\overline{a}$ divides the Deuring polynomial $H_{\ell}(x)$, and lift it to $a\in \Z$. Let $f_0, f_1, \dots, f_6\in \Z$ satisfy
\begin{equation*}\begin{cases}f_6\equiv f_0 \pmod{\ell^4}\\
                              f_5\equiv f_1 \pmod{\ell^4}\\
                              f_4\equiv f_2 \pmod{\ell^4}.\end{cases}\end{equation*}
Furthermore, assume that \begin{equation*}\begin{cases}f_6\equiv 1 \pmod{\ell}\\
                              f_5\equiv 0 \pmod{\ell}\end{cases}
                              \begin{cases}f_4\equiv \frac{1-a}{a} \pmod{\ell}\\
                              f_3\equiv 0 \pmod{\ell}.\end{cases}\end{equation*}

The equation $y^2=f_6x^6 +f_5x^5 +  f_4x^4 + f_3x^3 + f_2x^2 + f_1 x
+ 1$ defines a genus $2$ curve such that its Jacobian variety
satisfies the hypothesis of Theorem \ref{InerciaEnl}.
\end{thm}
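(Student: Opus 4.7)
The plan is to reduce the statement to Theorem \ref{Reducible} by showing that the Jacobian of the enlarged family has good supersingular reduction at $\ell$ and that its formal group law is close enough in $\ell$-adic precision to the one attached to the ``core'' curve of Theorem \ref{Reducible} that the uniform minimum-valuation condition on $\ell$-torsion transfers to it.

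First, I would observe that the mod $\ell^4$ palindromic congruences together with the explicit mod $\ell$ data force $f_0\equiv f_6\equiv 1$, $f_1\equiv f_5\equiv 0$, $f_2\equiv f_4\equiv (1-a)/a$, and $f_3\equiv 0$, all modulo $\ell$. Hence the reduction modulo $\ell$ of the new hyperelliptic equation coincides with the equation of the curve in Theorem \ref{Reducible}. Since the latter has good supersingular reduction (guaranteed by the divisibility condition on the Deuring polynomial), the Jacobian of the new curve also has good supersingular reduction at $\ell$, so one can attach to it a formal group law $\mathbf{F}$ over $\Z_\ell$.

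Next, I would compare $\mathbf{F}$ with the formal group law $\mathbf{F}_0$ of the Jacobian of the curve of Theorem \ref{Reducible}. Both are $2$-dimensional formal groups of height $2$ over $\Z_\ell$, and $\mathbf{F}$ is computed from the new hyperelliptic equation via an explicit algorithm in terms of the coefficients $f_0,\ldots,f_6$. The palindromic condition $f_i\equiv f_{6-i}\pmod{\ell^4}$ for $i=0,1,2$ means that the new equation differs from the ``purely palindromic'' equation with the prescribed top half $(f_3,\ldots,f_6)$ only in coefficients divisible by $\ell^4$, while the mod $\ell$ conditions identify this palindromic core, modulo $\ell$, with the equation of Theorem \ref{Reducible}. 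Propagating these precisions through the formal-group construction, $\mathbf{F}$ and $\mathbf{F}_0$ agree modulo $\ell^4$. The $\ell$-torsion coordinates are roots of the $[\ell]$-series, whose Newton polygon determines their valuations; in the supersingular height-$2$ regime these valuations are of the form $k/(\ell^2-1)$, which is far smaller than $4$, so a perturbation of coefficients by $\ell^4$ does not shift the Newton polygon vertices that govern them. Consequently the uniform-minimum-valuation condition of Theorem \ref{InerciaEnl} transfers from $\mathbf{F}_0$ (where it holds thanks to Theorem \ref{Reducible}) to $\mathbf{F}$.

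The main obstacle is the precision bookkeeping: one has to quantify how perturbations of size $\ell^4$ in the coefficients of the hyperelliptic equation propagate through the formal group law and into the $[\ell]$-series without disturbing the Newton polygon of the $\ell$-torsion points. The specific exponent $4$ is presumably the smallest one for which this chain of implications survives, and verifying it rigorously is the technical core of the argument, essentially carried out in \cite{Paper1}.
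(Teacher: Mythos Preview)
The paper does not prove this theorem; it is quoted verbatim as Theorem~6.4 of \cite{Paper1}, so there is no proof in the present paper to compare your proposal against. Your outline is a reasonable reconstruction of the strategy one expects in \cite{Paper1}: the mod~$\ell$ data force good supersingular reduction (via the reducible-Jacobian model of Theorem~\ref{Reducible}), and the mod~$\ell^4$ palindromic congruences control an $\ell$-adic perturbation small enough not to disturb the Newton-polygon information governing the valuations of $\ell$-torsion. You also correctly defer the precision bookkeeping to \cite{Paper1}.

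Two points to tighten. First, the formal group of a supersingular abelian \emph{surface} is $2$-dimensional of height~$4$, not height~$2$; the height-$2$ picture applies to each elliptic factor once the Jacobian splits in the palindromic case, and that splitting is what actually pins down the valuation $1/(\ell^2-1)$. Second, your claim that ``$\mathbf{F}$ and $\mathbf{F}_0$ agree modulo $\ell^4$'' overstates what the congruences give. The new equation agrees modulo $\ell^4$ with its own palindromic completion (top half $(f_3,\dots,f_6)$ mirrored), but that palindromic curve agrees with the specific core curve of Theorem~\ref{Reducible} only modulo~$\ell$. The argument therefore cannot be a direct $\ell^4$-comparison with $\mathbf{F}_0$; rather, one first shows that \emph{any} palindromic curve with the prescribed mod~$\ell$ reduction has reducible Jacobian with supersingular elliptic factors (hence satisfies the uniform-valuation hypothesis of Theorem~\ref{InerciaEnl}), and only then perturbs by $\ell^4$. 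Your sketch has the right shape but conflates these two comparison steps.
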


\begin{rem} Note that the genus $2$ curves from theorem \ref{Reducible} have a non-hyperelliptic involution. In fact, their Jacobians are reducible. Therefore, the image of the representations attached to the $\ell$-torsion points of these Jacobians cannot be $\GSp_4(\F_{\ell})$. The enlargement of the family of curves carried out in Theorem \ref{Enlargement} is thus essential to our construction.
\end{rem}

\subsection{Control of the image: the transvection}\label{transvection}

Our aim in this section is to ensure the existence of a transvection
in the image of $G_{\Q}$ by the Galois representation $\rho_{\ell}$
attached to the Jacobian of a genus $2$ curve. Combining Proposition
1.3 and Lemma 4.1 of \cite{LeDuff} we can state the following
result:

\begin{prop}\label{prop toro}
Let $p$ be a prime number, let $C/\Q$ be a genus $2$ curve and $J$
its Jacobian. Assume that $C$ has stable reduction of type $(II)$ or
$(VI)$ at $p$.

If $\ell\not=p$ is a prime number such that it does not divide
$(\widetilde{J}_v:\widetilde{J}^0_v)$ (which denotes the order of
the group of connected components of the special fibre of the
N\'{e}ron model of $J$ at $p$), then there exists an element in the
inertia group $I_p$ such that its image by $\rho_{\ell}$ is a
transvection.
\end{prop}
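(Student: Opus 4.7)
The plan is to apply Grothendieck's monodromy theorem for abelian varieties with semistable reduction (SGA 7), supplemented by the Namikawa--Ueno classification of the reduction of a genus $2$ curve and by Grothendieck's component-group formula. The argument splits naturally into three steps.

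First I invoke the same result of SGA 7 used in the proof of Theorem \ref{tame at p}: because $C$ has stable reduction at $p$, the Jacobian $J$ acquires semistable reduction, so there is a Galois-stable filtration
\begin{equation*}
0 \subset T' \subset T'' \subset T_\ell(J),
\end{equation*}
with $T''=(T')^{\perp}$ under the Weil pairing, on which $I_p$ acts trivially on each graded piece. The rank of $T'$ equals the toric rank $t$ of the special fibre of the N\'eron model of $J$. Consulting the Namikawa--Ueno classification one sees that stable reductions of type (II) and (VI) both correspond to $t=1$; this is the content of Lemma 4.1 of \cite{LeDuff}.

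Second, given any $\sigma\in I_p$, the operator $\rho_\ell(\sigma)-\Id$ satisfies $(\rho_\ell(\sigma)-\Id)^2=0$ on $J[\ell]$ and has image contained in $T'/\ell T'$, which is one-dimensional. Since $p\neq \ell$, the cyclotomic character is unramified at $p$, so $\rho_\ell(\sigma)\in \Sp_4(\F_\ell)$. A non-identity unipotent element of $\Sp_4(\F_\ell)$ whose nilpotent part has one-dimensional image is automatically a symplectic transvection, so it remains only to produce $\sigma\in I_p$ with non-trivial image.

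Third, this non-vanishing is precisely Proposition 1.3 of \cite{LeDuff}. Grothendieck's formula identifies the discriminant of the monodromy pairing on the character lattice of the toric part of $\widetilde{J}^0_v$ with the order of the component group $\widetilde{J}_v/\widetilde{J}^0_v$; the hypothesis $\ell\nmid (\widetilde{J}_v:\widetilde{J}^0_v)$ then guarantees that the pairing, and hence the monodromy map, remains non-degenerate after reduction modulo $\ell$. Any generator of the tame quotient of $I_p$ whose image in $\End(J[\ell])$ is non-zero thus yields the required transvection.

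The main obstacle I expect is the third step: translating the hypothesis on $(\widetilde{J}_v:\widetilde{J}^0_v)$ into non-vanishing of the mod $\ell$ monodromy is delicate and uses the full strength of Grothendieck's analysis in SGA 7, Exp. IX. The first two steps amount to bookkeeping with the monodromy filtration, the symplectic structure, and the toric-rank tables for stable reductions of genus $2$ curves.
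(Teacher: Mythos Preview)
Your proposal is correct and is essentially an unpacking of the paper's own argument, which consists solely of the sentence ``Combining Proposition 1.3 and Lemma 4.1 of \cite{LeDuff} we can state the following result.'' You have identified exactly those two ingredients---Lemma 4.1 for the toric rank $t=1$ in types (II) and (VI), and Proposition 1.3 for the non-vanishing of the mod $\ell$ monodromy under the component-group hypothesis---and spelled out how they combine via the monodromy filtration to yield a symplectic transvection.
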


Recall that if $C$ is a smooth, projective, geometrically connected
curve of genus $g\geq 2$ over a number field $K$, then $C$ has
stable reduction at a prime $\fp$ if and only if the Jacobian
variety attached to $C$ has semistable reduction at $\fp$ (see
\cite{Liu}, Remark 4.26 of chapter 10).

Proposition \ref{prop toro} leaves us a great amount of freedom to
choose $p$, and as a matter of fact we will always choose $p=5$. Of
course, this construction will not work for $\ell=5$, but this is a
minor hindrance. We just need a special construction for $\ell=5$.

We want to build a genus $2$ curve, defined over $\Q$, with stable
reduction of type (II) at $5$. Moreover, we will require that the
order of the group of connected components of the special fibre of
the N\'{e}ron model at $5$ is $1$ (thus ensuring that $\ell$ does
not divide it).

A theorem of Deligne and Mumford tells us that every smooth,
geometrically connected, projective curve defined over a local
field, say $K$, acquires stable reduction over a finite extension of
$K$ (see \cite{Liu}, Theorem 4.3 of Chapter 10). This stable
reduction can belong to one of the following types (I), (II), (III),
(IV), (V), (VI), (VII). When the curve has genus $2$, Q. Liu has
worked out a characterization of the type of (potential) stable
reduction in terms of the Igusa invariants (see \cite{Liu93}). We
will denote them by $J_2, J_4, J_6, J_8, J_{10}$, and also
$I_4:=J_2^2-2^3 3J_4$, $I_{12}:=-2^3J_4^3 + 3^2 J_2J_4J_6 - 3^3
J_6^2 - J_2^2 J_8$.

A technical remark should be made at this point. The results of Liu
are stated over a local field $K$ with separably closed residual
field $k$. We shall assume, from now on, that our curve $C$ is
defined over $\Q_{p, \unr}$ (the maximal unramified extension of
$\Q_p$, which satisfies this condition). In this way, what we shall
obtain after some reasoning is the existence of a transvection
inside the Galois group of the extension $\overline{\Q}_p/\Q_{p,
\unr}$, which in fact is the inertia group at $p$. In the
computation of the order of the group of connected components of the
special fibre of the N\'{e}ron model at $p$, the $p$-adic valuation
in $\Q_{p, \unr}$ shall come into play; but since $\Q_{p, \unr}$ is
an unramified extension of $\Q_p$, this valuation shall coincide
with the usual valuation in $\Q_p$, without any need to normalize.
Therefore, considering $\Q_{p, \unr}$ as the base field will not
give rise to any significant modification, and we shall be able to
apply the results of Liu without paying any further attention to
this point. We will just state the results concerning potential good
reduction (that is to say, type (I)) and type (II).

\begin{thm}\label{StableReductionII} Let $R$ be a discrete
valuation ring with maximal ideal $\fm$ and quotient field $K$. Let
$C/K$ be a smooth geometrically connected projective curve of genus
$2$, defined by the equation $y^2=f(x)$, where $f(x)$ is a degree
$6$ polynomial. Denote by $J_2, \dots, J_{10}$ the Igusa invariants
of $f(x)$, and denote by $\calC_{\overline s}$ the geometric special
fibre of a stable model of $C$ over some finite extension of $K$.
Then it holds:

\begin{itemize}

\item $\calC_{\overline{s}}$ is smooth if and only if
$J^5_{2i}J_{10}^{-i}\in R$ for all $i\leq 5$.

\item $\calC_{\overline{s}}$ is irreducible with a unique double
point if and only if $J_{2i}^6I_{12}^{-i}\in R$ for all $i\leq 5$
and $J_{10}^6 I_{12}^{-5}\in \fm$. If this is the case, the group
$\Phi$ of connected components of the N\'eron model  at $v$ is
isomorphic to $\Z/e\Z$, where $e=\frac{1}{6}v(J^6_{10}J^{-5}_{12})$.

\end{itemize}
\end{thm}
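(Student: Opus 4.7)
The plan is to appeal to the case-by-case analysis of stable reduction types for genus $2$ curves carried out by Liu in \cite{Liu93}; I would derive the two bullet points and the component group formula from his results, using the invariant-theoretic dictionary between Igusa invariants and the boundary strata of the compactified moduli space $\overline{M}_2$.

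For the smoothness criterion, the key observation is that the $J_{2i}$ are $\GL_2$-semi-invariants of weight $2i$, while $J_{10}$ is the discriminant of $f(x)$, a semi-invariant of weight $10$ vanishing exactly when $f(x)$ has a repeated root. Consequently, the ratios $J_{2i}^5 J_{10}^{-i}$ are $\GL_2$-invariants of weight $0$, and together they provide a set of absolute coordinates on the coarse moduli space $M_2$. Since $\calC_{\overline s}$ is smooth precisely when the reduction of $C$ defines a point of $M_2$ rather than a point of the boundary of $\overline{M}_2$, the valuative criterion of properness applied to $\overline{M}_2$ forces this to be equivalent to the integrality in $R$ of each of the absolute invariants $J_{2i}^5 J_{10}^{-i}$.

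For the second bullet point one proceeds analogously, with $I_{12}$ replacing $J_{10}$. A stable curve of type $(II)$ is irreducible with one node, and such curves form an open subset of the boundary divisor of irreducible nodal curves in $\overline{M}_2$. The invariant $I_{12}$ is the appropriate defining form for that stratum: it is of weight $12$ and does not vanish there, even though $J_{10}$ does. Therefore on this boundary component the role of absolute invariants is played by $J_{2i}^6 I_{12}^{-i}$, and the combined conditions $J_{2i}^6 I_{12}^{-i}\in R$ for $i\leq 5$ together with $J_{10}^6 I_{12}^{-5}\in \fm$ (the latter enforcing singularity rather than smoothness) pin down reduction of type $(II)$. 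The formula for the component group then follows from general theory of N\'eron models of Jacobians: if the unique node of $\calC_{\overline s}$ is \'etale-locally of the form $xy=\pi^e$ with $\pi$ a uniformizer, the dual graph of $\calC_{\overline s}$ is a loop of length $e$, so $\Phi$ is cyclic of order $e$; identifying the thickness $e$ with the valuation of the normalized absolute invariant $J_{10}^6 I_{12}^{-5}$ (divided by the weight $6$) yields $e=\frac{1}{6}v(J_{10}^6 I_{12}^{-5})$.

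The main obstacle in carrying out this program from scratch is producing the exact exponents and normalization factors: this requires a careful analysis of the ring of invariants of binary sextics (Igusa's theory) together with an explicit computation on the universal deformation of a curve of type $(II)$, both of which are carried out in detail in \cite{Liu93}. Rather than reproduce those calculations, I would invoke Liu's analysis directly.
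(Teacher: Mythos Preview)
Your proposal is consistent with the paper's treatment: the paper does not prove Theorem~\ref{StableReductionII} at all but simply quotes it from Liu \cite{Liu93}, so your decision to ultimately invoke Liu's analysis directly is exactly what the paper does. Your added moduli-theoretic heuristics (absolute invariants as coordinates on $M_2$, the valuative criterion, the dual-graph description of $\Phi$) are a welcome gloss, but they go beyond anything the paper itself supplies.
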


\begin{rem} In the first case in the theorem above, the curve $C$ is said
to have \emph{potential good reduction}, and in the second case
\emph{potential stable good reduction of type (II)}.
\end{rem}

Let us now turn our attention to a simple example:

\begin{exmp}\label{Ejemplo5}

Let us consider the curve $C$ defined by the following equation:
\begin{equation*}y^2=x^6 + x^5 + x^3 + x + 1.\end{equation*}

By using the Magma Computational Algebra System, we can compute the
Igusa invariants of $C$. We obtain the following results:

\begin{multline*}J_2=-97/4, J_4=1323/128, J_6=-14515/1024,\\
J_8=3881491/65536, J_{10}=6845/256.\end{multline*}

Recall that the last of the invariants was the discriminant of the
equation. Since $J_{10}=6845/256=2^{-8}\cdot 5\cdot 37^2$,  the only
two odd primes of bad reduction are $5$ and $37$. Thus we know that,
outside these two primes and possibly 2, the curve has good
reduction.

Let us study the type of reduction at $5$. Computing $J_{2i}^5
J_{10}^{-i}$, for  $i=1, 2, 3, 4, 5$, we see that, if $p=5$ (and
also if $p=37$), these numbers do not all belong to $\Z_p$.
Therefore, for $p=5$ (and $p=37$), the reduction of $C$ at $p$ is
not good.

Now we wish to determine if the reduction is of type (II). We have
to compute $J_{2i}^6I_{12}^{-i}$ for $i=1, 2, 3, 4, 5$. We begin
with $I_{12}$;

\begin{equation*}I_{12}=-\frac{1095163}{64}=-2^{-6}\cdot 37 \cdot
29599.\end{equation*}

Note that $p=5$ divides the discriminant of the equation, but it
does not divide $I_{12}$. And this is enough to ensure that the
reduction at $p=5$ is (potentially) stable of type (II). Moreover,
since $p=5$ divides the discriminant of the equation just once, the
reduction is indeed stable. And it turns out that
$v_5(J_{10}^6I_{12}^{-5})=6$, which means that the order of the
group of connected components of the special fibre of the N\'{e}ron
model at $p=5$ is $1$.

\end{exmp}

\bigskip

Now we will take advantage of this example to state a general
result:

\begin{thm}\label{Th5} Let $C$ be a genus $2$ curve defined over $\Q$ by the
equation $y^2=f(x)$, where $f(x)=f_6 x^6 + f_5 x^5 + f_4 x^4 + f_3
x^3 + f_2 x^2 + f_1 x + f_0 \in \Z[x]$ is a polynomial of degree 6
without multiple roots, satisfying that
\begin{equation*}\begin{cases}f_6\equiv f_5\equiv f_3\equiv f_1\equiv
f_0\equiv 1\mod 25\\ f_4\equiv f_2\equiv 0\mod
25.\end{cases}\end{equation*} Then $C$ has stable reduction at $5$,
and this reduction is of type (II). The order of the group of
connected components of the special fibre of the N\'{e}ron model at
$p=5$ is $1$.
\end{thm}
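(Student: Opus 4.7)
The plan is to extend the computation of Example \ref{Ejemplo5} to every curve satisfying the prescribed congruences. The key observation is that the Igusa invariants $J_{2}, J_{4}, J_{6}, J_{8}, J_{10}$ and the auxiliary invariant $I_{12}$ are polynomial expressions in the coefficients $f_0, \ldots, f_6$ whose own rational coefficients have denominators that are powers of $2$. Since $2$ is a unit in $\Z_5$, each of these invariants lies in $\Z_5$ whenever the $f_i$ do, and its reduction modulo $25$ depends only on the classes $f_i \bmod 25$.

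First I would observe that the coefficients of the example curve, $f_6=f_5=f_3=f_1=f_0=1$ and $f_4=f_2=0$, match modulo $25$ the residues prescribed by the hypothesis. Consequently, for every curve $C$ satisfying the hypothesis, the values $J_{10}$ and $I_{12}$ coincide modulo $25$ with the values tabulated in the example: $J_{10} = 6845/256 \equiv 20 \pmod{25}$ and $I_{12} = -1095163/64 \not\equiv 0 \pmod 5$. Since $20 = 4 \cdot 5$ is divisible by $5$ but not by $25$, this forces $v_5(J_{10}) = 1$ and $v_5(I_{12}) = 0$ for any such $C$.

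Next I would apply the second clause of Theorem \ref{StableReductionII} with $R = \Z_{5,\unr}$. The hypothesis $J_{2i}^6 I_{12}^{-i} \in R$ for $i \le 5$ holds because $I_{12}$ is a $5$-adic unit and each $J_{2i}$ lies in $\Z_5$. The hypothesis $J_{10}^6 I_{12}^{-5} \in \fm$ reduces to $v_5(J_{10}) \ge 1$, which we have. Hence $C$ acquires stable reduction of type (II), and the theorem's formula yields
\begin{equation*}
e = \tfrac{1}{6}\bigl(6 v_5(J_{10}) - 5 v_5(I_{12})\bigr) = 1,
\end{equation*}
so the component group of the N\'eron model of $J(C)$ at $5$ is trivial.

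The remaining and most delicate step, which I expect to be the main obstacle, is upgrading \emph{potential} stable reduction to stable reduction already over $\Q_{5,\unr}$, as required by Proposition \ref{prop toro}. Following the reasoning of Example \ref{Ejemplo5}, this is forced by the fact that the discriminant of the defining sextic has $5$-adic valuation equal to $1$: no ramified base change is needed to achieve the stable model when $5$ divides the discriminant exactly once. Since the discriminant coincides with $J_{10}$ up to a $5$-adic unit (a power of $2$), the previous step gives $v_5(\Delta) = 1$, and the claim follows.
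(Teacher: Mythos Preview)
Your argument is correct and follows essentially the same route as the paper: transfer the Igusa-invariant computations from Example~\ref{Ejemplo5} via the congruences modulo $25$, use $v_5(I_{12})=0$ and $v_5(J_{10})=1$ to verify the type~(II) criterion of Theorem~\ref{StableReductionII}, and read off $e=1$. The only cosmetic difference is ordering: the paper first observes $v_5(\Delta)=1$ to conclude stable (not merely potentially stable) reduction, then checks the type, whereas you treat stability last and flag it as ``the main obstacle''---in fact it is the easiest step, exactly as you eventually argue.
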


\begin{proof} Due to the congruence condition above, the discriminant of the equation
$y^2=f(x)$ is congruent to the discriminant of the equation $y^2=x^6
+ x^5 + x^3 + x + 1$ modulo 25, that is to say, it is congruent to
$28037120\equiv 20 \mod 25$. Therefore $p=5$ divides the
discriminant of our equation once and only once, thus ensuring that
the curve $C$ has stable reduction. Let us see what type of
reduction it has. Since the invariant $I_{12}$ of the polynomial
$x^6 + x^5 + x^3 + x + 1$ is not divisible by $5$, the same holds
for the invariant $I_{12}$ of $f(x)$ (for both are congruent to each
other modulo 25). Consequently, since the invariants $J_{2i}$ belong
to $\Z_{5}$ (the only denominators which can appear are the powers
of 2), $J_{12}^{2i}I_{12}^{-i}\in \Z_5$. And finally, since $5$ does
divide the discriminant of $f(x)$, that is to say, $J_{12}$, it is
clear that $J_{12}^{10} I_{12}^{-5}$ belongs to the maximal ideal of
$\Z_5$. Theorem \ref{StableReductionII} implies that the reduction
is of type (II).

\end{proof}

\subsection{Control of the image: characteristic polynomials}

Let us now deal with the existence of elements in
$\mathrm{Im}\rho_{\ell}$ with particular characteristic polynomials.
The candidate elements we are going to look at are the images of the
Frobenius elements at primes different from $\ell$, where $C$ has
good reduction, since we have a great deal of information about
their shape.

Namely, let $q\not=\ell$ be a prime number, and assume that $J(C)$
has good reduction at $q$. Inside $G_{\Q}$ we can consider the
decomposition group at $q$, which is isomorphic to
$\Gal(\overline{\Q}_q/\Q_q)$. Different immersions of
$\Gal(\overline{\Q}_q/\Q_q)$ into $G_{\Q}$ give rise to conjugate
subgroups. Consider the Frobenius morphism $x\mapsto x^q$ in
$\Gal(\overline{\F}_{q}/\F_{q})$. There are many liftings of this
element to $\Gal(\overline{\Q}_q/\Q_q)$. Since $C$ has good
reduction at $q$, the image by $\rho_{\ell}$ of any two lifts of the
Frobenius must coincide, and thus an element in $\GSp_4(\F_{\ell})$
is determined save conjugacy. In any case, the characteristic
polynomial of an element in $\GSp_4(\F_{\ell})$ is not altered by
conjugation, so this polynomial is well defined, independently of
any choices we may make along the way. What does this polynomial
look like?

Let us consider the Frobenius endomorphism $\phi_q$ acting on the
reduction $\widetilde{C}$ of $C$ at $q$. It is well known (see for
instance \cite{Cohen-Frey}, $\S$ 14.1.6, Theorem 14.16) that the
characteristic polynomial of $\phi_q$ has the following shape:
\begin{equation*} P(X)=X^4 + a X^3 + b X^2 + a q X + q^2,\end{equation*}
for certain $a, b\in \Z$. More precisely, if we denote by $N_1$
(resp. $N_2$) the number of points of $C$ over $\F_{q}$ (resp.
$\F_{q^2}$), we can compute $a$ and $b$ by using the formulae
\begin{equation}\label{PuntosDeC}\begin{cases}a:=N_1 - q-1\\
                              b:=(N_2 - q^2 - 1 + a^2)/2.\end{cases}\end{equation}

Furthermore, it can be proven (cf. Proposition 10.20 in
\cite{Milne}) that the polynomial obtained from $P(X)$ by reducing
its coefficients modulo $\ell$ coincides with the characteristic
polynomial of $\rho_{\ell}(\Frob_q)$.

In the previous section our problem was to construct a curve such
that, at a certain well chosen prime $p$, it satisfied some
condition. Our strategy there was to choose the prime $p$ once and
for all at the beginning; namely, we took $p=5$, and we established
a congruence condition modulo $5^2$ such that, whenever it is
satisfied by a genus $2$ curve $C$, we achieve our objective. The
first point in which this section differs from the previous one is
that now we will not choose the primes $q_1$ and $q_2$ beforehand;
the primes $q_1$ and $q_2$ will actually depend on $\ell$.

When we face this problem, a natural question arises: Given a prime
$q$, what conditions must a pair $(a, b)$ satisfy in order to ensure
that the polynomial $P(X)=X^4 + aX^3 + bX^2 + qaX + q^2$ is the
characteristic polynomial of the Frobenius endomorphism at $q$ of a
genus $2$ curve?

In \cite{Maisner-Nart}, the authors address the problem of
determining whether, given finite field $\F_q$ and a pair of
positive natural numbers $(N_1, N_2)$, there exists a genus $2$
curve $C$ with $N_1$ points over $\F_q$ and $N_2$ points over
$\F_{q^2}$. We shall make use of their results.

Firstly, let us recall the definition of a Weil polynomial (see
Definition 2.2 of \cite{Maisner-Nart}).

\begin{defn} Let $q$ be a power of a prime number. We will say that the polynomial
$P(X)=X^4 + a X^3 + b X^2 + q a X + q^2\in \Z[X]$ is a \emph{Weil
polynomial} if $\vert a\vert \leq 4\sqrt{q}$ and $2\vert a \vert
\sqrt{q} - 2q \leq b\leq \frac{a^2}{4} + 2q$.
\end{defn}

\begin{rem}To simplify the problem, we will always choose $a=1$, so we will only
have to take care to choose $b$ satisfying $2\sqrt{q} - 2q \leq
b\leq \frac{1}{4} + 2q$.\end{rem}

Let us fix an odd prime number $q$. Collecting Theorem 2.15 and
Theorem 4.3 of \cite{Maisner-Nart}, we can state the following
result:

\begin{thm}\label{Thm MN} Let $P(X)=X^4 + a X^3 + b X^2 + q a X + q^2\in
\Z[X]$ be a Weil polynomial, and let $\Delta_0=a^2 - 4b + 8q$.
Assume that $\Delta_0$ is not a square in $\Z$, $q\nmid b$ and
$a^2\not\in \{0, q + b, 2 b, 3(b - q)\}$. Then there exists a smooth
projective curve of genus $2$, defined over $\F_q$, with $N_1=q + 1
+ a$ points over $\F_q$ and $N_2= 2b - a^2 +q^2 + 1$ points over
$\F_{q^2}$.
\end{thm}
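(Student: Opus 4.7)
The plan is essentially to invoke Honda--Tate theory together with the theorem that (almost) every principally polarized absolutely simple abelian surface over a finite field is the Jacobian of a smooth genus $2$ curve. So I would organize the proof in three main steps.

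First, I would use Honda--Tate theory to produce an abelian surface $A/\F_q$ whose characteristic polynomial of Frobenius is exactly $P(X)$. The Weil polynomial hypothesis is precisely what guarantees that the reciprocal roots of $P(X)$ lie on the circle $|z|=\sqrt{q}$ and come in pairs $\{\alpha,q/\alpha\}$, which is the necessary and sufficient condition (coming from Honda--Tate) for $P(X)$ to appear as (a power of) the characteristic polynomial of Frobenius on some isogeny class of abelian varieties over $\F_q$. A short discussion of the Newton polygon (using $q\nmid b$, which forces the middle coefficient to be a unit and thus controls the Newton slopes, typically ruling out supersingular behavior) would pin down that one really gets a surface rather than a higher-dimensional variety or a power of an elliptic curve.

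Second, I would show that this abelian surface is not isogenous to a product of elliptic curves, equivalently that $P(X)$ is irreducible over $\Q$ or has its quadratic factors corresponding to distinct $q$-Weil numbers that do not come from a product decomposition. This is precisely the role of the hypotheses $\Delta_0\notin(\Z^*)^2$ and $a^2\notin\{0,q+b,2b,3(b-q)\}$: each of these exceptional values corresponds to a way in which $P(X)$ can factor so that the isogeny class contains a product $E_1\times E_2$, or so that the surface acquires extra endomorphisms preventing it from carrying an indecomposable principal polarization. I would go through the four cases one by one, in each case writing $P(X)$ as a product $Q_1(X)Q_2(X)$ of two degree-two Weil polynomials and checking that the corresponding identity on $a$ and $b$ is one of the four forbidden values. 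The irreducibility (or at least indecomposability as a polarized object) of the resulting abelian surface is the main obstacle here, and it is the heart of what Maisner and Nart establish.

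Third, with an absolutely simple principally polarized abelian surface $A/\F_q$ in hand, I would appeal to the classical theorem (Weil, refined by Oort--Ueno and by Gonz\'alez--Gu\`ardia--Rotger in explicit form) that every such $A$ over a perfect field is isomorphic to the Jacobian of a smooth projective curve of genus $2$. This gives the desired curve $C$, and the counts $N_1=\#C(\F_q)=q+1+a$ and $N_2=\#C(\F_{q^2})=2b-a^2+q^2+1$ follow from the relation between the characteristic polynomial of Frobenius on $J(C)$ and the zeta function of $C$, via the usual identities $N_r=q^r+1-\sum_i\alpha_i^r$ applied to $r=1,2$. The content that is specific to Maisner--Nart, rather than formal, is the classification in step two, so that is where I would concentrate the work; the existence result of step one and the Jacobian result of step three are essentially black boxes.
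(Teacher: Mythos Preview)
The paper does not itself prove this theorem; it is quoted as the combination of Theorem~2.15 and Theorem~4.3 of Maisner--Nart, and your three-step outline (Honda--Tate, exclude non-Jacobians, read off the point counts) is indeed the shape of their argument.

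Your step~2, however, misidentifies what the hypotheses do. The condition that $\Delta_0$ is a nonsquare in $\Z$ already makes $P(X)$ irreducible over $\Q$: for $q$ prime, any rational quadratic factor of such a $q$-Weil polynomial must have constant term $\pm q$, and chasing the two resulting factorizations forces either $\Delta_0$ to be a square or $a=0$. So the Honda--Tate surface is simple over $\F_q$ from the outset, and you cannot, as you propose, ``write $P(X)$ as a product $Q_1(X)Q_2(X)$ of two degree-two Weil polynomials'' over $\Q$ in each of the four excluded cases---no such rational factorization exists. The conditions $a^2\notin\{0,\,q+b,\,2b,\,3(b-q)\}$ are instead the conditions (from Maisner--Nart's classification) that prevent the already simple surface from splitting after base change to a finite extension of $\F_q$; the relevant analysis is about the structure of the CM field $\Q(\pi)$ generated by a root of $P$, not about a factorization of $P$ over $\Q$. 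There is also a genuine gap between your steps~2 and~3: Honda--Tate hands you only an isogeny class, and you have not argued that some member of it carries a \emph{principal} polarization. That is not automatic and is part of the content of Maisner--Nart's Theorem~4.3; the Weil/Oort--Ueno statement you invoke only applies once a principal polarization is in hand.
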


\begin{rem} The previous result claims the existence of a genus
$2$ curve, say $C$, defined over $\F_q$ with $N_1$ points over
$\F_q$ and $N_2$ points over $\F_{q^2}$. If $q$ is odd, we know that
there exists a hyperelliptic equation $y^2=f(x)$ defining $C$, with
$f(x)\in \F_q[x]$ a polynomial of degree $6$ and without multiple
roots. Since there is only a finite number of such polynomials
$f(x)\in \F_q[x]$, one can compute the curve $C$ simply by  an
exhaustive search, so one can say that this construction is
effective. Nevertheless, there are algorithms to compute genus $2$
curves with a given number of points over $\F_q$ and over
$\F_{q^2}$. For instance, see \cite{Eisentrager-Lauter}.
\end{rem}

Keeping this result in mind, the following two propositions show us
how to construct suitable $q_1$ and $q_2$.

\begin{prop}\label{q_1} Let $\ell$ be an odd prime number. Choose $q_1$ such
that $q_1\equiv 1\pmod \ell$. Then there exists a projective curve
of genus $2$, $C_1/\Q$, such that it has good reduction at $q_1$,
and the characteristic polynomial of the Frobenius endomorphism at
$q_1$, $P_1(X)=X^4 + a_1 X^3 + b_1 X^2 + q_1a_1X + q_1^2$ satisfies
that $\Delta_0(P_1)$ is not a square in $\F_{\ell}$ and
$a_1\not\equiv 0 \pmod \ell$.
\end{prop}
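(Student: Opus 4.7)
The plan is to apply the Maisner–Nart theorem (Theorem \ref{Thm MN}) to produce a genus $2$ curve $\widetilde{C}_1$ over $\F_{q_1}$ whose zeta function has the desired shape, and then lift it to a curve $C_1/\Q$ by lifting the coefficients of any defining hyperelliptic equation to $\Z$; such a lift automatically has good reduction at $q_1$, so the characteristic polynomial of $\rho_{\ell}(\Frob_{q_1})$ is the reduction mod $\ell$ of the Frobenius polynomial of $\widetilde{C}_1$.

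Following the remark after the definition of Weil polynomial, I would fix $a_1=1$ throughout. Since $\ell$ is odd we have $a_1\not\equiv 0\pmod{\ell}$ for free. The quantity to control is then $\Delta_0(P_1) = 1-4b_1+8q_1$, and since $q_1\equiv 1\pmod{\ell}$ it reduces to $9-4b_1\pmod{\ell}$. The non-squares modulo $\ell$ form $(\ell-1)/2$ residue classes, so I pick any residue class $\overline{b}\in \F_{\ell}$ for which $9-4\overline{b}$ is a non-zero non-square in $\F_{\ell}$. I then look for an integer $b_1\equiv \overline{b}\pmod{\ell}$ in the admissible Weil range $2\sqrt{q_1}-2q_1\leq b_1\leq \tfrac{1}{4}+2q_1$ which also avoids the finite list of excluded values in Theorem \ref{Thm MN}, namely $q_1\mid b_1$ and $1\in\{0,\,q_1+b_1,\,2b_1,\,3(b_1-q_1)\}$. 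Since the Weil interval has length of order $4q_1$ and intersects every residue class mod $\ell$ in roughly $4q_1/\ell$ integers, for $q_1$ sufficiently large compared to $\ell$ (which is possible by Dirichlet's theorem applied to the progression $1\bmod \ell$) such a $b_1$ certainly exists. Moreover, a non-square in $\F_{\ell}$ lifts to a non-square in $\Z$, so the hypothesis of Theorem \ref{Thm MN} that $\Delta_0$ is not a square in $\Z$ is automatic.

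With $(a_1,b_1)$ so chosen, Theorem \ref{Thm MN} delivers a smooth projective genus $2$ curve $\widetilde{C}_1/\F_{q_1}$ having $N_1=q_1+2$ points over $\F_{q_1}$ and $N_2=2b_1-1+q_1^2+1$ points over $\F_{q_1^2}$; by the remark following the theorem (or by \cite{Maisner-Nart}), one may furthermore take $\widetilde{C}_1$ to be defined by a hyperelliptic equation $y^2=\widetilde{f}(x)$ with $\widetilde{f}\in \F_{q_1}[x]$ a squarefree polynomial of degree $6$. I lift $\widetilde{f}$ to any $f\in \Z[x]$ of degree $6$ reducing to $\widetilde{f}$ modulo $q_1$; then $C_1:y^2=f(x)$ is a genus $2$ curve defined over $\Q$ whose reduction mod $q_1$ is $\widetilde{C}_1$, so $C_1$ has good reduction at $q_1$ and the characteristic polynomial of $\rho_{\ell}(\Frob_{q_1})$ is the reduction of $P_1(X)=X^4+X^3+b_1X^2+q_1X+q_1^2$ modulo $\ell$, which by construction satisfies both $a_1\not\equiv 0\pmod{\ell}$ and $\Delta_0(P_1)\not\in (\F_{\ell}^\times)^2$.

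The only delicate point is the simultaneous satisfaction of the Weil-interval condition, the congruence $b_1\equiv \overline{b}\pmod{\ell}$, and the four exclusions from Theorem \ref{Thm MN}; this is a purely counting/density issue and is handled by taking $q_1$ large enough in its Dirichlet progression, so the argument presents no substantive obstacle once the right parameters have been identified.
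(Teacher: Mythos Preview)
Your approach is essentially the paper's: fix $a_1=1$, pick $b_1$ in a residue class modulo $\ell$ making $\Delta_0(P_1)$ a non-square, verify the hypotheses of Theorem~\ref{Thm MN}, and lift the resulting curve to $\Q$.

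There is one point where the paper is sharper. The proposition is stated for \emph{any} prime $q_1\equiv 1\pmod{\ell}$, not for sufficiently large ones, so invoking Dirichlet to enlarge $q_1$ slightly weakens what you prove. The paper avoids this by observing that $q_1\equiv 1\pmod{\ell}$ already forces $q_1>\ell$; hence every residue class modulo $\ell$ contains some integer $b_1$ with $0<b_1<q_1$. Taking $b_1$ in that range automatically places it in the Weil interval (since $2\sqrt{q_1}-2q_1<0<b_1<q_1<2q_1+\tfrac14$) and makes the exclusions of Theorem~\ref{Thm MN} immediate: $q_1\nmid b_1$ because $0<b_1<q_1$; $a_1^2=1\neq 0$; $q_1+b_1>1$; $2b_1$ is even so $\neq 1$; and $3(b_1-q_1)<0<1$. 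No density argument or further enlargement of $q_1$ is needed. With this small simplification your proof matches the paper's.
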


\begin{proof} Fix $a_1=1$. Since $q_1\equiv 1\pmod \ell$, it
follows that $q_1>\ell$. Therefore, if we choose any element
$\overline{b}_1\in \F_{\ell}$, there exists $b_1\in \Z$, $0<b_1<q_1$
mapping into $\overline{b}_1$. Therefore $P_1(X)=X^4 + a_1 X^3 + b_1
X^2 + q_1a_1X + q_1^2$ is a Weil polynomial. We will choose
$\overline{b}_1$ such that $1-4\overline{b}_1 + 8q_1$ is not a
square in $\F_{\ell}$ (since $4$ is prime to $\ell$, the expression
$1-4\overline{b}_1 + 8q_1$ runs through all the elements of
$\F_{\ell}$ as $\overline{b}_1$ varies, so this is clearly
feasible).

Now it is easy to check that the pair $(a_1, b_1)$ satisfies all the
conditions in Theorem \ref{Thm MN}, so that there exists a smooth
projective curve of genus $2$ defined over $\F_{q_1}$ with a
suitable number of points over $\F_{q_1}$ and $\F_{q_1^2}$. Lifting
this curve to $\Q$, we obtain the curve we were seeking.
\end{proof}

\begin{prop}\label{q_2} Let $\ell$ be an odd prime number. Choose $q_2$ such
that $q_2\equiv 1\pmod \ell$ and $q_2>3\ell$. Then there exists a
projective curve of genus $2$, $C_2/\Q$, such that it has good
reduction at $q_2$, and the characteristic polynomial of the
Frobenius endomorphism at $q_2$, $P_2(X)=X^4 + a_2 X^3 + b_2 X^2 +
q_2a_2X + q_2^2$ satisfies that $\Delta_0(P_2)$ is a non-zero square
in $\F_{\ell}$ but is not a square in $\Z$, and $P_2(X)$ does not
break up in linear factors over $\F_{\ell}$.
\end{prop}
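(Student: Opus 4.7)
The plan is to imitate the proof of Proposition \ref{q_1}, setting $a_2 = 1$ and choosing a suitable $b_2 \in \Z$ so that the pair $(1, b_2)$ meets the hypotheses of Theorem \ref{Thm MN}; the resulting curve over $\F_{q_2}$ will then be lifted to $\Q$. The complication compared to Proposition \ref{q_1} is that we now demand the reduction $\Delta_0(P_2) \bmod \ell = 9 - 4\overline{b}_2$ to be a non-zero square in $\F_\ell$ (whereas in Proposition \ref{q_1} it was a non-square, which automatically ruled out its being a square in $\Z$), while simultaneously asking that the integer $\Delta_0(P_2) = 1 - 4 b_2 + 8 q_2$ not be a square in $\Z$ and that $P_2 \bmod \ell$ not split into linear factors. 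This splits the construction into two stages.

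First I would fix the residue $\overline b_2 \in \F_\ell$. The condition that $9 - 4\overline b_2$ is a non-zero square in $\F_\ell$ selects exactly $(\ell-1)/2$ residues. For each such $\overline b_2$ the reduction $\overline P_2 = X^4 + X^3 + \overline b_2 X^2 + X + 1$ factors as $(X^2 - uX + 1)(X^2 - vX + 1)$ with $u, v \in \F_\ell$ the two distinct roots of $X^2 + X + (\overline b_2 - 2)$, and $\overline P_2$ splits completely into linear factors iff both $u^2 - 4$ and $v^2 - 4$ are squares in $\F_\ell$. Parametrizing $S = \{u \in \F_\ell : u^2-4 \text{ is a square}\}$ by $u = t + 1/t$ for $t \in \F_\ell^{\times}$ gives $|S| = (\ell+1)/2$; the bad residues correspond to unordered pairs $\{u, v\} \subset S$ with $u + v = -1$ and $u \ne v$, and there are at most $|S|/2 = (\ell+1)/4$ of these. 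Since $(\ell-1)/2 > (\ell+1)/4$ whenever $\ell \geq 5$, a good $\overline b_2$ always exists.

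Next I would lift $\overline b_2$ to an integer $b_2 \in (0, q_2)$ with $b_2 \equiv \overline b_2 \pmod \ell$. Any such lift automatically satisfies the Weil-polynomial inequality, the condition $q_2 \nmid b_2$, and $1 \notin \{0, q_2 + b_2, 2 b_2, 3(b_2 - q_2)\}$; the only remaining hypothesis is that $\Delta_0 = 1 - 4 b_2 + 8 q_2$ is not a perfect square in $\Z$. Since $\Delta_0 \in (4 q_2, 8 q_2)$, any square $N^2 = \Delta_0$ has $N$ odd and $N \in (2\sqrt{q_2}, 2\sqrt 2\,\sqrt{q_2})$; moreover, $b_2 \equiv \overline b_2 \pmod \ell$ forces $N \equiv \pm n_0 \pmod \ell$, where $n_0^2 \equiv 9 - 4\overline b_2 \pmod \ell$. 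Counting $N$ in this interval that also fall in the two residue classes modulo $2\ell$ gives at most $O(\sqrt{q_2}/\ell + 1)$ bad lifts, while the number of integer lifts of $\overline b_2$ in $(0, q_2)$ is at least $\lfloor q_2/\ell \rfloor$. The assumption $q_2 > 3\ell$ is exactly what ensures that the latter exceeds the former for every $\ell \geq 5$, so a good $b_2$ exists. Theorem \ref{Thm MN} then produces a smooth projective genus $2$ curve $\widetilde C_2/\F_{q_2}$ with Frobenius characteristic polynomial $P_2(X)$, and one lifts its hyperelliptic equation to $\Z[x]$ exactly as in Proposition \ref{q_1}.

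The main obstacle is the sharp counting on both fronts: on the one hand, showing that the $(\ell-1)/2$ "good-square" residues cannot all be accounted for by unordered pairs in $S$ with $u + v = -1$ (which needs the strict inequality $(\ell-1)/2 > (\ell+1)/4$); on the other, verifying that the hypothesis $q_2 > 3\ell$ really leaves room in the residue class $b_2 \equiv \overline b_2 \pmod \ell$ for an integer whose $\Delta_0$ is not a perfect square — the tightest case being small $\ell$ with $q_2$ just above $3\ell$, where the odd-parity and $\pm n_0 \pmod \ell$ constraints on $N$ must be exploited to cut the bad count below the total count of lifts.
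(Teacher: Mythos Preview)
Your plan is sound and would yield a proof, but it diverges from the paper's on both sub-problems. For the choice of the residue $\overline b_2$, the paper is constructive rather than combinatorial: one first picks $z\in\F_\ell$ with $z^2-16q_2$ a non-square (found by starting at any square and repeatedly subtracting $16q_2$ until a non-square first appears; the last square hit is $z^2$), and then sets $9-4\overline b_2\equiv(z+1)^2$. With this choice the quadratic factor $X^2-\tfrac{z}{2}X+q_2$ of $\overline P_2$ has discriminant $(z^2-16q_2)/4$, a non-square by design, so $\overline P_2$ cannot split into linear factors. Your counting argument, comparing $(\ell-1)/2$ with $(\ell+1)/4$, reaches the same conclusion for $\ell\ge 5$ but is purely existential. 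For the lift $b_2$, the paper uses the hypothesis $q_2>3\ell$ only to guarantee \emph{three} lifts $b_2,\,b_2+\ell,\,b_2+2\ell$ in $(0,q_2)$, and then invokes a two-line lemma (Lemma~\ref{aside}) showing that $A,\,A-4\ell,\,A-8\ell$ cannot all be perfect squares in $\Z$. This replaces your asymptotic $O(\sqrt{q_2}/\ell+1)$ estimate---which is correct in spirit but, as you yourself flag, still has to be made sharp in the tight regime $q_2\approx 3\ell$---by an elementary fact requiring no estimates at all. In short, both approaches work; the paper's is shorter and constructive, while yours trades the ad hoc $z$-trick and Lemma~\ref{aside} for two pigeonhole counts, of which the second still needs the explicit constants filled in before it is a proof.
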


\begin{proof}
As in the proof of Proposition \ref{q_1}, we will fix $a_2=1$. Note
that, since $q_2>3\ell$, for each element $\overline{b}_2\in
\F_{\ell}$ there exist three values of  $b_2\in \Z, 0<b_2<q_2$ such
that $b_2$ maps into $\overline{b}_2$, which can be taken as $b_2$,
$\ell + b_2$, $2\ell + b_2$.

Let us choose an element $z\in \F_{\ell}$ such that $z^2-16q_2$ is
not a square in $\F_{\ell}$. Such an element exists: if we take any
square $x^2\in \F_{\ell}$, and add $-16q_2$ as many times as we
wish, we can obtain any element in $\F_{\ell}$ that we like. In
particular, if we consider the sequence $x^2, x^2 - 16q_2, x^2 -
2\cdot 16q_2, x^2- 3\cdot 16q_2, \dots$, a point will come when we
obtain a non-square element. The previous element shall be our
$z^2$. If $z^2=1$, we will take $z=1$. Now let us choose $b_2<q_2$
such that $1-4b_2+8q_2$ is congruent to $(z+1)^2$ modulo $\ell$.
This is possible for the same reason as in the proof of Proposition
\ref{q_1}. Moreover, at the beginning of the proof we noted that
there are, in fact, three possible choices for $b_2$ which are
strictly smaller than $q_2$. It is not difficult to check that for
the three of them $1-4b_2+8q_2$ cannot be a square in $\Z$. We have
set this claim aside in Lemma \ref{aside}. Therefore, we can choose
$b_2$ such that $1-4b_2 + 8q_2$ is not a square in $\Z$, and
furthermore $b_2$ is not divisible by $q_2$.

If we choose $b_2$ in this way, it is easy to check that the
conditions of Theorem \ref{Thm MN} hold. Therefore, there exists a
smooth projective genus $2$ curve over $\F_{q_2}$ such that the
characteristic polynomial of the Frobenius endomorphism of $q_2$ is
$P_2(X)=X^4 + a_2 X^3 + b_2 X^2 + q_2a_2X + q_2^2$. Now we ascertain
that the thesis of our Proposition holds. It is clear that
$\Delta_0(P_2)\equiv (z+1)^2\pmod \ell$ is a non-zero square in
$\F_{\ell}$. It remains to show that $P_2(X)$ does not split into
linear factors. Call $\alpha_2, q_2/\alpha_2, \beta_2, q_2/\beta_2$
the roots of $P_2(X)$. The fact that $\Delta_0(P_2)$ is a square
tells us that the polynomials $(X-\alpha_2)(X-q_2/\alpha_2)$ and
$(X-\beta_2)(X-q_2/\beta_2)$ are defined over $\F_{\ell}$. We will
achieve our objective if we see that one of these polynomials is
irreducible over $\F_{\ell}$. Since both $\alpha_2 + q_2/\alpha_2$
and $\beta_2 + q_2/\beta_2$ are roots of $P_0(X)=X^2 + a_2X + b_2 -
(b_2-2q_2)$, they are given by the expressions $\frac{-a_2
\pm\sqrt{a_2^2-4(b_2 -2q_2)}}{2}$. Interchanging $\alpha_2$ and
$\beta_2$ if necessary, we can assume that $\alpha_2+
q_2/\alpha_2=\frac{-a_2 +\sqrt{a_2^2-4(b_2 -2q_2)}}{2}$. Therefore
the polynomial $(X-\alpha_2)(X-q_2/\alpha_2)$ can be written as $X^2
-\frac{-a_2 +\sqrt{a_2^2-4(b_2 -2q_2)}}{2}X + q_2$, and its
discriminant is
\begin{equation*}\Delta=\left(\frac{-a_2+\sqrt{a_2^2-4(b_2-2q_2)}}{2}\right)^2-4q_2.\end{equation*}

Let us compute this quantity modulo $\ell$. Since $a_2=1$,
$\Delta_0=1-4b_2+8q_2\equiv (z+1)^2$, we obtain that $\Delta\equiv
\frac{z-16q_2}{4}\pmod \ell$, which is not a square in $\F_{\ell}$
because of the choice of $z$. This proves that $P_2(X)$ does not
decompose in linear factors over $\F_{\ell}$.

\end{proof}

\begin{lem}\label{aside} Let $A$ be a natural number, and let
$\ell$ be a prime. Then the three numbers $A$, $A-4\ell$, $A-8\ell$
cannot be squares in $\Z$.
\end{lem}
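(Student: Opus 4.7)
The plan is to proceed by contradiction: assume $A=x^2$, $A-4\ell=y^2$, $A-8\ell=z^2$ for non-negative integers $x\geq y\geq z$, and derive two incompatible values for $y$. The key leverage is that $4\ell$ has very few factorizations as a product of two positive integers of the same parity, because $\ell$ is prime.

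First I would rewrite $x^2-y^2=4\ell$ as $(x-y)(x+y)=4\ell$. Since $x-y$ and $x+y$ have the same parity (their sum is $2x$) and their product is even, both must be even. Writing $x-y=2u$ and $x+y=2v$ with $u,v\in\Z_{\geq 0}$ and $u\leq v$, this becomes $uv=\ell$. Here $u\geq 1$, for otherwise $x=y$ and the left side of $x^2-y^2=4\ell$ would be zero. The primality of $\ell$ then forces $u=1$, $v=\ell$, so
\begin{equation*}
x=u+v=\ell+1,\qquad y=v-u=\ell-1.
\end{equation*}

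The same argument applied to $y^2-z^2=4\ell$ yields $y=\ell+1$ and $z=\ell-1$. Comparing the two determinations of $y$ gives $\ell-1=\ell+1$, which is absurd for any prime $\ell$. This completes the contradiction, so the three numbers cannot simultaneously be squares.

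I do not anticipate any real obstacle: the entire argument rests on the single observation that $4\ell$, with $\ell$ prime, admits essentially one factorization into two positive even factors, namely $2\cdot(2\ell)$. The only points requiring minor care are verifying that $u\geq 1$ (to rule out the degenerate factorization $0\cdot\ell$) and ensuring the argument covers both $\ell=2$ and odd $\ell$ uniformly, which it does since $uv=\ell$ prime has the same unique ordered factorization $(1,\ell)$ in both cases.
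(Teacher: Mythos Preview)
Your proof is correct and follows essentially the same route as the paper's: factor $x^2-y^2=4\ell$ and $y^2-z^2=4\ell$, use primality of $\ell$ to pin down the factors, and obtain two incompatible values for $y$. Your version is slightly more explicit about the parity step and the exclusion of the degenerate factor $u=0$, but the underlying argument is identical.
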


\begin{proof} Assume that there exist $x, y, z$ positive integers such that
$A=x^2$, $A-4\ell=y^2$ and $A-8\ell=z^2$. From the first two
equations we obtain that $4\ell=x^2-y^2=(x+y)(x-y)$. Therefore
$\ell=\frac{x+y}{2}\cdot \frac{x-y}{2}$. Since $\ell$ is a prime
number, it follows that $x-y=2$, and moreover
$\ell=\frac{(y+2)+y}{2}\cdot \frac{(y+2)-y}{2}=y+1$. The same
reasoning applied to the last two equations yields that $y-z=2$, and
we can write $\ell=\frac{y+z}{2}\cdot
\frac{y-z}{2}=\frac{(z+2)+z}{2}\cdot \frac{(z+2)-z}{2}=z+1$. This is
clearly a contradiction.
\end{proof}

\section{Main Result}

In this section we will state the main result concerning tame Galois
realizations of $\GSp_4(\F_{\ell})$. Our starting point is the
following straightforward statement:

\begin{quote}Let $C$ be a smooth projective curve of genus $2$, defined over
$\Q$ and such that, if we denote by $J$ the Jacobian variety
attached to $C$ and by $\rho_{\ell}$ the Galois representation
attached to the $\ell$-torsion points of $J$, the following
conditions are satisfied:

\begin{itemize}

\item The Galois extension obtained by adjoining to $\Q$ the
coordinates of the $\ell$-torsion points of $J$ is tamely ramified.

\item The image of $\rho_{\ell}$ coincides with the general
symplectic group $\GSp_4(\F_{\ell})$.
\end{itemize}

Then $\rho_{\ell}$ provides a tamely ramified Galois realization of
$\GSp_4(\F_{\ell})$.

\end{quote}

Throughout this paper, we have sought to remodel these conditions in
order to make them look like congruences. We have succeeded to a
great extent. Replacing these conditions with those (more
restrictive but simpler) obtained in the previous sections, we
obtain the following result:

\begin{thm}\label{teorema ramificacion 4} Let $C$ be a genus $2$ curve
defined by a hyperelliptic equation
\begin{equation*}y^2=f(x),\end{equation*} where $f(x)=f_6 x^6 + f_5 x^5 + f_4 x^4 + f_3 x^3 + f_2 x^2 + f_1 x + f_0\in \Z[x]$ is a
polynomial of degree $6$ without multiple factors. Let $\ell\geq 7$
be a prime number, and let $\calP$ be the set of prime numbers that
divide the order of $\GSp_4(\F_{\ell})$.

Let $a\in \F_{\ell}$ be such that the elliptic curve defined by
$y^2=x^3 + (1-a)/a x^2 + (1-a)/a x + 1$ is supersingular.
Furthermore, let $q_1, q_2\equiv 1\pmod \ell$ be different prime
numbers with $q_2>3\ell$. Let $C_1/\Q$ be a genus $2$ curve such
that it has good reduction at $q_1$, and the characteristic
polynomial of the Frobenius endomorphism at $q_1$, $P_1(X)=X^4 + a_1
X^3 + b_1 X^2 + q_1a_1X + q_1^2$ satisfies that $\Delta_0(P_1)$ is
not a square in $\F_{\ell}$ and $a_1\not\equiv 0 \pmod \ell$. Let
$C_2/\Q$ be a genus $2$ curve such that it has good reduction at
$q_2$, and the characteristic polynomial of the Frobenius
endomorphism at $q_2$, $P_2(X)=X^4 + a_2 X^3 + b_2 X^2 + q_2a_2X +
q_2^2$ satisfies that $\Delta_0(P_2)$ is a non-zero square in
$\F_{\ell}$ but is not a square in $\Z$, and $P_2(X)$ does not break
up in linear factors over $\F_{\ell}$. Consider hyperelliptic
equations $y^2=c_6 x^6 + c_5 x^5 + c_4 x^4 + c_3 x^3 + c_2 x^2 + c_1
x + c_0$ and $y^2=d_6x^6 + d_5x^5 + d_4x^4 + d_3x^3 + d_2x^2 + d_1x
+ d_0$ defining $C_1$ and $C_2$.

Assume that the following conditions hold:
\begin{itemize}
\item The following congruences mod $2^4$ hold:
\begin{equation*}\begin{cases}f_0\equiv f_6\equiv 1\pmod{16}\\
         f_1\equiv f_5\equiv 0 \pmod{16}\end{cases}
         \begin{cases}f_2\equiv f_4\equiv 4 \pmod{16}\\
         f_3\equiv 2\pmod{16}.\end{cases}\end{equation*}

\item The following congruences mod $3$ hold:
\begin{equation*}\begin{cases}f_0\equiv f_6\equiv 1\pmod
3\\ f_1\equiv f_5\equiv 0\pmod 3\end{cases}
\begin{cases}f_2\equiv f_4\equiv 1\pmod 3\\
f_3\equiv 0 \pmod 3.\end{cases}\end{equation*}

\item The following congruences mod $5^2$ hold:
\begin{equation*}\begin{cases}f_6\equiv f_5\equiv f_3\equiv f_1\equiv
f_0\equiv 1\mod 25\\ f_4\equiv f_2\equiv 0\mod
25.\end{cases}\end{equation*}

\item The following congruences mod $\ell^4$ hold:
\begin{equation*}\begin{cases}f_6\equiv f_0 \pmod{\ell^4}\\
                              f_5\equiv f_1 \pmod{\ell^4}\\
                              f_4\equiv f_2 \pmod{\ell^4}.\end{cases}\end{equation*}
Furthermore, \begin{equation*}\begin{cases}f_6\equiv 1 \pmod{\ell}\\
                              f_5\equiv 0 \pmod{\ell}\end{cases}
                              \begin{cases}f_4\equiv (1-a)/a \pmod{\ell}\\
                              f_3\equiv 0 \pmod{\ell}.\end{cases}\end{equation*}

\item The following congruences mod $q_1$ hold:
\begin{equation*}f_i\equiv c_i \pmod{q_1}, i=0, 1, \dots, 6.\end{equation*}

\item The following congruences mod $q_2$ hold:
\begin{equation*}f_i\equiv d_i \pmod{q_2}, i=0, 1, \dots, 6.\end{equation*}

\item For all $p\in \calP$ different from $2$, $3$, $5$, $q_1$,
$q_2$ and $\ell$, the following congruences hold:
\begin{equation*}\begin{cases}f_0\equiv f_6\equiv 1\pmod
p\\ f_1\equiv f_5\equiv 0\pmod p\end{cases}
\begin{cases}f_2\equiv f_4\equiv 0\pmod p\\
f_3\equiv 0 \pmod p.\end{cases}\end{equation*}
\end{itemize}

Then the Galois representation attached to the $\ell$-torsion points
of the Jacobian of $C$ provides a tamely ramified Galois realization
of $\GSp_4(\F_{\ell})$.
\end{thm}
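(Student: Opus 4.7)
The plan is to split the proof into two parts: verifying tame ramification at every prime, and verifying that the image of $\rho_{\ell}$ is the full $\GSp_4(\F_{\ell})$.

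For tame ramification I would treat primes $p\neq \ell$ uniformly via Theorem \ref{tame at p}, by showing that $C$ (and hence $J=J(C)$) has semistable reduction at every prime that can possibly contribute wild inertia. By the pro-$p$-group argument recalled at the beginning of Section \ref{p distinto de l}, it suffices to handle primes in $\calP$. The mod $2^4$ congruences trigger Proposition \ref{ReductionAt2} (semistable at $2$), the mod $3$ congruences trigger Proposition \ref{ReductionAt3} (good at $3$), the mod $5^2$ congruences trigger Theorem \ref{Th5} (type (II) stable at $5$, hence semistable), and the congruences modulo $q_1$, $q_2$ force $C$ to reduce to $C_1$ and $C_2$ which by hypothesis have good reduction at those primes. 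For the remaining primes $p\in\calP\setminus\{2,3,5,q_1,q_2,\ell\}$, the congruences reduce $f(x)$ modulo $p$ to $x^6+1$, whose discriminant $-2^6\cdot 3^6$ is a $p$-unit, giving good reduction at $p$ by the reasoning in Section \ref{Section good reduction}. Tameness at $\ell$ is handled separately: the mod $\ell^4$ and mod $\ell$ congruences are exactly the hypotheses of Theorem \ref{Enlargement}, which feeds into Theorem \ref{InerciaEnl}, giving $\rho_{\ell}(I_{\ell,\wild})=\{\Id\}$.

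For the image I would apply Theorem \ref{TeoremaModificado} with $G=\mathrm{Im}\,\rho_{\ell}\cap \Sp_4(\F_{\ell})$. A transvection is produced from the reduction at $5$: Theorem \ref{Th5} provides stable reduction of type (II) at $5$ with trivial group of connected components, so by Proposition \ref{prop toro} the image of the inertia group at $5$ under $\rho_{\ell}$ contains a transvection (which lies in $\Sp_4$ since it has determinant $1$). The two characteristic polynomials required by Theorem \ref{TeoremaModificado} come from the Frobenii at $q_1$ and $q_2$: the congruences modulo $q_j$ force $C$ to reduce to $C_j$ at $q_j$, so the characteristic polynomial of $\rho_{\ell}(\Frob_{q_j})$ is $P_j(X)$ reduced modulo $\ell$. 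Since $q_j\equiv 1\pmod \ell$, the polynomial $P_j(X)\bmod\ell$ has the symplectic palindromic form $X^4+a_jX^3+b_jX^2+a_jX+1$, so the roots pair as $\alpha_j,1/\alpha_j,\beta_j,1/\beta_j$ and $\alpha_j+1/\alpha_j,\beta_j+1/\beta_j$ are the roots of $X^2+a_jX+(b_j-2)$ with discriminant $\Delta_0(P_j)\bmod\ell$.

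It remains to check that $P_1,P_2$ meet the conditions of Theorem \ref{TeoremaModificado}. For $P_1$: since $\Delta_0(P_1)$ is a non-square in $\F_{\ell}$, the auxiliary quadratic is irreducible, so neither $\alpha_1+1/\alpha_1$ nor $\beta_1+1/\beta_1$ lies in $\F_{\ell}$; their sum is $-a_1\not\equiv 0\pmod\ell$ by hypothesis. For $P_2$: $\Delta_0(P_2)$ is a nonzero square in $\F_{\ell}$, so $\alpha_2+1/\alpha_2,\beta_2+1/\beta_2\in\F_{\ell}$ and $\Delta_0(P_2)\neq 0$; since $P_2$ does not split into linear factors over $\F_{\ell}$, one of the quadratic factors $(X-\alpha_2)(X-1/\alpha_2)$, $(X-\beta_2)(X-1/\beta_2)$ is irreducible, whence $\alpha_2\notin\F_{\ell}$ after relabeling. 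Theorem \ref{TeoremaModificado} then yields $G=\Sp_4(\F_{\ell})$, and the cyclotomic-character argument from Section \ref{Image of the representation} upgrades this to $\mathrm{Im}\,\rho_{\ell}=\GSp_4(\F_{\ell})$.

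The most delicate step is the transvection: one must ensure that the prime of $\Sp_4$ produced by the inertia at $5$ survives intersection with $\Sp_4(\F_\ell)$ and that the group of components is truly trivial (which is why Theorem \ref{Th5} computes $v_5(J_{10}^6I_{12}^{-5})=6$ exactly, giving $e=1$ and thus no $\ell$-torsion obstruction in Proposition \ref{prop toro}); everything else is bookkeeping across the seven bulleted congruence packages.
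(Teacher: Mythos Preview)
Your proposal is correct and follows essentially the same route as the paper, whose own proof is the one-line remark that ``the result is a direct consequence of the considerations made in the previous sections''; you have simply unpacked that sentence by matching each congruence package to the relevant earlier statement (Propositions \ref{ReductionAt3}, \ref{ReductionAt2}, Theorems \ref{Th5}, \ref{Enlargement}, \ref{InerciaEnl}, \ref{tame at p} for tameness, and Proposition \ref{prop toro} plus Theorem \ref{TeoremaModificado} for the image). The one point worth making explicit, which you use implicitly when writing $P_j(X)\bmod\ell$ in palindromic form, is that the condition $q_j\equiv 1\pmod\ell$ forces the symplectic multiplier $\chi_\ell(\Frob_{q_j})=q_j\bmod\ell$ to be $1$, so that $\rho_\ell(\Frob_{q_j})$ genuinely lies in $G=\mathrm{Im}\,\rho_\ell\cap\Sp_4(\F_\ell)$ and Theorem \ref{TeoremaModificado} applies.
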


\begin{proof} The existence of $a\in \F_{\ell}$  follows from Theorem 1-(b) of \cite{Brillhart-Morton} (cf. Corollary 3.6 of \cite{Ariasdereyna-Vila2009}), and the existence of the genus two curves $C_1$ and $C_2$ is proved in Propositions \ref{q_1} and \ref{q_2}. The result is
a direct consequence of the considerations made in the previous
sections.\end{proof}

A quick look at this theorem shows that, for each prime number
$\ell\geq 7$, there exists a genus $2$ curve $C$ satisfying all the
hypotheses, simply because of the Chinese Remainder Theorem.
Therefore, we may write the following corollary:

\begin{cor}\label{Realizacion} For each prime number $\ell\geq 7$,
there exists a Galois extension over $\Q$ which is tamely ramified
and has Galois group $\GSp_4(\F_{\ell})$.
\end{cor}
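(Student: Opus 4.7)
The plan is to derive the corollary directly from Theorem~\ref{teorema ramificacion 4} by producing, for each $\ell\geq 7$, concrete coefficients $f_0,\dots,f_6\in\Z$ satisfying all of its congruence hypotheses at once. First I would fix $\ell$ and assemble the auxiliary data: pick $a\in\F_\ell$ corresponding to a supersingular elliptic curve via Brillhart--Morton; use Dirichlet's theorem on primes in arithmetic progressions to choose two distinct primes $q_1,q_2\equiv 1\pmod{\ell}$ with $q_2>3\ell$, and moreover avoiding the finite set $\calP$ of primes dividing $|\GSp_4(\F_\ell)|$ (possible because there are infinitely many primes in the residue class $1\bmod\ell$ and only finitely many to avoid); then invoke Propositions~\ref{q_1} and~\ref{q_2} to obtain the genus-$2$ curves $C_1,C_2$ with hyperelliptic models of coefficients $c_i$ and $d_i$.

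Next, the crucial observation is that every condition imposed by the theorem on the coefficient vector $(f_0,\dots,f_6)$ is a congruence condition modulo one of the moduli $2^4$, $3$, $5^2$, $\ell^4$, $q_1$, $q_2$, or the remaining primes $p\in\calP\setminus\{2,3,5,q_1,q_2,\ell\}$. By our choices these moduli are pairwise coprime (the $q_i$ were selected outside $\calP$, and $\ell\geq 7$ means $\ell\notin\{2,3,5\}$), so the Chinese Remainder Theorem yields, for each index $i\in\{0,1,\dots,6\}$ separately, an integer $f_i$ simultaneously satisfying the prescribed residues. Thus the system of congruences is consistent and admits solutions.

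It remains to check the non-degeneracy hypothesis that $f(x)=\sum f_i x^i$ has degree exactly $6$ and no multiple factors. The degree-$6$ condition is forced by the congruence $f_6\equiv 1\pmod{\ell}$, which prevents $f_6=0$. For the squarefreeness, note that the set of solutions produced by CRT is an entire coset of the lattice $N\Z^7$ in $\Z^7$, where $N$ is the product of all the moduli involved; since the vanishing of the discriminant of $f$ cuts out a proper Zariski-closed subset of the coefficient space, a generic representative of this coset has nonzero discriminant, so one may replace any $f_i$ by $f_i+k_i N$ for suitable integers $k_i$ to achieve a squarefree $f(x)$. With this $f$, the curve $C:y^2=f(x)$ meets every hypothesis of Theorem~\ref{teorema ramificacion 4}, hence the representation $\rho_\ell$ attached to the $\ell$-torsion of its Jacobian furnishes a tamely ramified Galois realization of $\GSp_4(\F_\ell)$. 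The construction involves no real obstacle beyond bookkeeping: the substantive work has been carried out in the preceding sections, and only the compatibility of the finitely many local conditions needs to be checked.
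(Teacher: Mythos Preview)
Your proposal is correct and follows essentially the same route as the paper, which simply invokes the Chinese Remainder Theorem to produce coefficients $f_0,\dots,f_6$ satisfying all the congruence hypotheses of Theorem~\ref{teorema ramificacion 4}. You supply more detail than the paper does---in particular the care taken to choose $q_1,q_2\notin\calP$ (harmless but not strictly needed, since the theorem already excludes $q_1,q_2$ from the last bullet) and the genericity argument for squarefreeness (in fact squarefreeness is automatic here, since the mod~$3$ condition forces the discriminant of $f$ to be $\equiv 2\pmod 3$, hence nonzero)---but the underlying argument is the same.
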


\begin{rem}\label{El primo 5} As we remarked at the beginning of Section \ref{transvection}, we excluded the prime $\ell=5$ just in order to write a neat
statement for all $\ell\not=5$. Let us now tackle the case $\ell=5$.
Consider the hyperelliptic curve $C$ defined over $\Q$ by the
equation \begin{equation}\label{Eq5} y^2=x^6 + 391300x^4 + 1170 x^3
+ 1300 x^2 + 1.\end{equation} To simplify the notation, call
$f_6=1$, $f_5=0$, $f_4=391300$, $f_3=1170$, $f_2=1300$, $f_1=0$,
$f_0=1$.

We can compute the reduction data of this particular curve by means
of the algorithm of Liu (which is implemented in SAGE). We obtain
that this curve has good reduction outside (possibly) the primes
$2$, $27792683$ and $195476205803858674906021$. In any case, at the
last two primes the conductor exponent is $1$. Therefore the curve
has stable reduction at all odd primes. The algorithm of Liu does
not compute the conductor at the prime $2$. Nevertheless, in this
case it is easy to check that the coefficients $f_i$ of the equation
defining the curve satisfy that
\begin{equation*}\begin{cases}f_6\equiv f_0\equiv 1\pmod{16}\\
         f_5\equiv f_1\equiv 0\pmod{16}\end{cases}
         \begin{cases}f_4\equiv f_2\equiv 4\pmod{16}\\
         f_3\equiv 2\pmod{16}.\end{cases}\end{equation*}

Therefore, Proposition \ref{ReductionAt2} ensures that the reduction
of $C$ at $2$ is also stable. As a conclusion, we can say that the
Galois representation attached to the $5$-torsion points of the
Jacobian surface of $C$ is tamely ramified outside the prime
$\ell=5$.

On the other hand, note that the equation defining $C$ is congruent
modulo $5^4$ with the supersingular symmetric equation $y^2=x^6 +
1300 x^4 + 1170 x^3 + 1300 x^2 + 1$. This guarantees that the wild
inertia group at $5$ acts trivially on the group of $5$-torsion
points.

Let us denote by $\rho_{\ell}:G_{\Q}\rightarrow \GSp_4(\F_{5})$ the
Galois representation which arises from the action of the Galois
group on the points of $5$-torsion of the Jacobian variety attached
to $C$.

The computation of the reduction data of $C$ at the prime
$p=27792683$ shows that the stable reduction at $p$ is of type (II).
Therefore, this prime satisfies the hypothesis of Proposition
\ref{prop toro}. Furthermore the order of the group of connected
components of the special fibre of the N\'{e}ron model at $p$ is
$1$, so it is not divisible by $5$. This ensures the existence of a
transvection in the group
$\mathrm{Im}\rho_{\ell}\subset\GSp_4(\F_{\ell})$. In order to prove
that the image of the Galois representation is $\GSp_4(\F_{\ell})$,
we will make use of Proposition \ref{Prop LeDuff}.

For instance, let us consider the prime $q=19$. The number of points
of $C$ over $\F_{19}$ is $22$, and the number of points of $C$ over
$\F_{19^2}$ is $410$. Therefore, Equations \eqref{PuntosDeC} allow
us to compute the characteristic polynomial of the Frobenius
endomorphism at $q$, namely $P(X)=X^4 + 2X^3 + 26X^2 + 38X + 361$.
It is not difficult to ascertain that this polynomial is irreducible
over $\F_{5}$. Therefore, we conclude that the image of $\rho_5$ is
$\GSp_4(\F_{5})$. Thus this group can also be realized as the Galois
group over $\Q$ of a tamely ramified extension.
\end{rem}

The previous remark allows us to state Corollary \ref{Realizacion}
without excluding the prime $\ell=5$:

\begin{cor}\label{Realizacion2} For each prime number $\ell\geq 5$,
there exists a Galois extension over $\Q$ which is tamely ramified
and has Galois group $\GSp_4(\F_{\ell})$.
\end{cor}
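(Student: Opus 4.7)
The plan is to split on whether $\ell = 5$ or $\ell \geq 7$, invoking the machinery already assembled for the generic case and handling the small prime by a direct, verifiable example.

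For $\ell \geq 7$, my strategy is simply to quote Corollary \ref{Realizacion}, which in turn is an application of Theorem \ref{teorema ramificacion 4} combined with the Chinese Remainder Theorem. Specifically, I would first collect the data the theorem requires: the element $a \in \F_{\ell}$ producing a supersingular Deuring polynomial factor (guaranteed by \cite{Brillhart-Morton}), the primes $q_1, q_2 \equiv 1 \pmod{\ell}$ with $q_2 > 3\ell$ (abundantly supplied by Dirichlet), and the auxiliary curves $C_1, C_2$ from Propositions \ref{q_1} and \ref{q_2}. Then I would observe that the finitely many congruence conditions imposed on the coefficients $f_0, \dots, f_6$ have pairwise coprime moduli ($2^4$, $3$, $5^2$, $\ell^4$, $q_1$, $q_2$, and the remaining primes in $\calP$), so the Chinese Remainder Theorem yields integer solutions; a polynomial $f(x)$ of degree $6$ without multiple factors satisfying all the congruences exists (one can perturb freely outside the finite set of moduli to destroy any accidental multiple roots).

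For $\ell = 5$, I would reproduce the verification indicated in Remark \ref{El primo 5} for the explicit curve $C$ of equation \eqref{Eq5}. The argument has four components, each of which I would carry out in turn. First, good or semistable reduction outside $\{5\}$: for $p = 2$ this follows from Proposition \ref{ReductionAt2} since the coefficients satisfy the required congruences mod $16$, and for odd $p \neq 5$ one invokes Liu's algorithm to confirm that the conductor exponent is at most $1$ at the two remaining primes of bad reduction. Second, triviality of wild inertia at $5$: the equation is congruent modulo $5^4$ to a supersingular symmetric equation in the family of Theorem \ref{Enlargement}, so Theorem \ref{InerciaEnl} applies. Third, a transvection in the image: the stable reduction at $p = 27792683$ is of type (II) with trivial component group, so Proposition \ref{prop toro} delivers a transvection inside $I_p$. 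Fourth, an element with irreducible characteristic polynomial: computing the Frobenius polynomial at $q = 19$ via the point counts and reducing mod $5$ gives $X^4 + 2X^3 + X^2 + 3X + 1$, which one checks to be irreducible over $\F_5$; combined with the transvection, Proposition \ref{Prop LeDuff} forces $\mathrm{Im}\,\rho_5 \supseteq \Sp_4(\F_5)$, and the surjectivity of the cyclotomic character then upgrades this to $\GSp_4(\F_5)$.

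Concatenating both cases yields a tamely ramified Galois realization of $\GSp_4(\F_{\ell})$ over $\Q$ for every prime $\ell \geq 5$, which is the statement of Corollary \ref{Realizacion2}. The main obstacle, and really the only non-routine part, is the $\ell = 5$ verification: the generic framework built in the paper uses Theorem \ref{TeoremaModificado} precisely to avoid having to produce an element with irreducible characteristic polynomial (since that existence relied on an unproved Hardy--Littlewood conjecture), but for a single specific prime we can simply exhibit one by machine computation. Thus the $\ell = 5$ case sidesteps the theoretical difficulty by replacing an existence statement with an explicit witness, which is why isolating it is the cleanest route.
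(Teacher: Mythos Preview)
Your proposal is correct and follows exactly the paper's own route: the paper simply notes that Remark \ref{El primo 5} (the explicit $\ell=5$ verification you reproduce) together with Corollary \ref{Realizacion} (the $\ell\geq 7$ case via Theorem \ref{teorema ramificacion 4} and the Chinese Remainder Theorem) yields the corollary. Your expansion of both pieces is accurate, including the Frobenius polynomial at $q=19$ and its reduction mod $5$.
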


As a matter of fact, we have proven not just the existence of a
tamely ramified Galois realization of $\GSp_4(\F_{\ell})$, but of
infinitely many of them.

\begin{thm}\label{TeoremaPrincipal} For each prime number $\ell\geq 5$, there exist
infinitely many tamely ramified Galois extensions over $\Q$ with
Galois group $\GSp_4(\F_{\ell})$.
\end{thm}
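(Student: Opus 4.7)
The strategy is to produce, for each prime $\ell\geq 5$, not merely one curve satisfying the hypotheses of Theorem \ref{teorema ramificacion 4} (together with Remark \ref{El primo 5} for $\ell=5$), but an infinite sequence of such curves whose associated Galois extensions have pairwise different sets of ramified primes. Since an extension of $\Q$ is determined only up to a finite ambiguity by its ramification locus, this will force the associated tame Galois realizations of $\GSp_4(\F_\ell)$ to be pairwise distinct.

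First I would fix any realization $K_0/\Q$ arising from a curve $C_0$ given by Corollary \ref{Realizacion2}, and let $S_0$ denote the finite set of primes ramified in $K_0/\Q$. For each prime $P$ sufficiently large with $P\notin S_0\cup\calP\cup\{2,3,5,q_1,q_2,\ell\}$, I would construct a curve $C_P$ satisfying all the congruence conditions of Theorem \ref{teorema ramificacion 4}, and, in addition, an extra congruence modulo $P$ forcing $C_P$ to have stable reduction of type $(II)$ at $P$ with component group of order coprime to $\ell$. Because $P$ is coprime to every other modulus already fixed, the Chinese Remainder Theorem guarantees that the combined congruence system admits $\Z$-integral solutions. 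The extra condition at $P$ does not disturb tameness: stable reduction is semistable, so Theorem \ref{tame at p} yields $\rho_\ell(I_{P,\wild})=\{\Id\}$.

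By construction $C_P$ still satisfies the hypotheses of Theorem \ref{teorema ramificacion 4}, so the associated representation $\rho_\ell^{(P)}$ is surjective onto $\GSp_4(\F_\ell)$ and cuts out a tamely ramified Galois extension $K_P/\Q$. Moreover, by Proposition \ref{prop toro}, the image of the inertia at $P$ contains a transvection, so $P$ ramifies in $K_P$. Since $P\notin S_0$ this forces $K_P\ne K_0$. Iterating, with the exceptional set enlarged at each stage to absorb previously used primes and their ramification loci, produces an infinite sequence of pairwise distinct tamely ramified Galois realizations of $\GSp_4(\F_\ell)$ over $\Q$, proving the theorem.

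The main technical obstacle is realizing the local condition at $P$: one needs, for every sufficiently large prime $P$, an explicit congruence class modulo $P$ (playing the role that Theorem \ref{Th5} plays for $P=5$) ensuring stable reduction of type $(II)$ with $\ell$-coprime component group. The cleanest way to obtain this is to exhibit, for each admissible $P$, a polynomial $\bar f_P(x)\in\F_P[x]$ of degree $6$ whose Igusa invariants satisfy the criterion of Theorem \ref{StableReductionII} for type $(II)$ and, further, $v_P(J_{10}^6 I_{12}^{-5})=6$, so that the component group has order $1$ (in particular coprime to $\ell$). Such $\bar f_P$ exist in abundance once $P$ is large enough, since the defining conditions are Zariski open and non-empty in the moduli of degree $6$ polynomials; concretely, one may take $\bar f_P$ having exactly one double root and otherwise separable, and check that $I_{12}\not\equiv 0\pmod P$ by a simple count. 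Lifting $\bar f_P$ to $\Z[x]$ and combining with the congruences of Theorem \ref{teorema ramificacion 4} via the Chinese Remainder Theorem completes the construction of $C_P$.
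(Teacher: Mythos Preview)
Your approach is correct and in fact more careful than the paper's own proof. The paper's argument is extremely brief: for $\ell\geq 7$ it says only that the claim ``is clear from the statement of Theorem~\ref{teorema ramificacion 4}'', and for $\ell=5$ it observes that any curve congruent to the explicit one of Remark~\ref{El primo 5} modulo suitable powers of $2,3,5,13,27792683$ again yields a tame realization. In other words, the paper records only that there are infinitely many \emph{curves} satisfying the congruence hypotheses, and does not spell out why the associated $\ell$-torsion fields are pairwise distinct as extensions of $\Q$. Your device of forcing type-(II) stable reduction at a fresh auxiliary prime $P$---so that, via Proposition~\ref{prop toro}, inertia at $P$ acts through a nontrivial transvection and hence $P$ ramifies in $K_P$---is precisely what is needed to separate the extensions and make the ``infinitely many'' assertion rigorous. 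The iteration (enlarging the excluded set by the ramification loci already produced) is the standard way to conclude.

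One small correction: the local condition at $P$ must be imposed modulo $P^2$, not merely modulo $P$. Specifying $\bar f_P\in\F_P[x]$ with a single node only forces $P\mid J_{10}$; to obtain $v_P(J_{10}^6 I_{12}^{-5})=6$ (and hence component group of order $1$) you need $v_P(J_{10})=1$ exactly, which is a condition on the coefficients modulo $P^2$. This is exactly parallel to Theorem~\ref{Th5}, where the congruences are taken modulo $25$ rather than $5$. With that adjustment your construction goes through: for any odd $P$ outside the fixed set one can lift a nodal sextic over $\F_P$ with $I_{12}\not\equiv 0$ to $\Z$ so that the discriminant has $P$-valuation exactly $1$, impose the resulting mod-$P^2$ congruence alongside those of Theorem~\ref{teorema ramificacion 4} via the Chinese Remainder Theorem, and proceed as you describe.
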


\begin{proof} If $\ell\geq 7$, this is clear from the statement of
Theorem \ref{teorema ramificacion 4}. If $\ell=5$, note that the
cardinal of $\GSp_4(\F_{5})$ equals $37440000=2^9\cdot 3^2\cdot
5^4\cdot 13$. Therefore, each curve $C$ given by a hyperelliptic
equation congruent to \eqref{Eq5} modulo a suitable power of the
primes $2, 3, 5, 13, 27792683$ will provide a tamely ramified Galois
realization of $\GSp_4(\F_{5})$.
\end{proof}

\begin{rem} The symplectic group $\GSp_4(\F_2)$ is isomorphic to the symmetric group $S_6$, thus it is already known to be realizable as the Galois group of a tamely ramified extension of $\Q$ (cf. Proposition 1 in \cite{PlansVila2004}). If $\ell=3$, we hope that a tame Galois realization of $\GSp_4(\F_{3})$ can also be obtained by using different techniques. For instance, one could make use of the isomorphism $\Sp_4(\F_{3})\simeq U(4,2)$ to obtain a regular realization of this group (cf. \cite{Malle-Matzat}), and then apply the techniques in \cite{Plans2003}.
\end{rem}

\section{Example}

In this section we will present an example to illustrate how Theorem
\ref{teorema ramificacion 4} allows us to compute explicitly genus
$2$ curves providing tame Galois realizations of the group
$\GSp_4(\F_{\ell})$. In fact, Theorem \ref{teorema ramificacion 4}
can easily be turned into an algorithm to compute these genus $2$
curves. Even though we have not explicitly formulated it in this
way, we hope this example will clarify how the algorithm works.

Firstly, note the simple well-known result:

\begin{lem}\label{cardinal} Let $q$ be a prime number. Then
\begin{equation*}\card(\GSp_{2n}(\F_q))=(q-1)q^{\frac{(2n)^2}{4}}\prod_{j=1}^{n}
(q^{2j}-1).\end{equation*}
\end{lem}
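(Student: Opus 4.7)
The plan is to reduce the problem to computing $\card(\Sp_{2n}(\F_q))$ by exploiting the multiplier character. By definition $\GSp_{2n}$ consists of matrices $g$ preserving the standard symplectic form up to a scalar $\mu(g)\in \F_q^*$; the assignment $g\mapsto \mu(g)$ is a group homomorphism whose kernel is precisely $\Sp_{2n}(\F_q)$. I would first check that $\mu$ is surjective: with respect to a symplectic basis $e_1,\dots,e_n,f_1,\dots,f_n$, the block-diagonal matrix $\mathrm{diag}(\lambda\Id_n,\Id_n)$ preserves the form up to the scalar $\lambda$, so every element of $\F_q^*$ is attained. This yields
\begin{equation*}
\card(\GSp_{2n}(\F_q)) = (q-1)\cdot \card(\Sp_{2n}(\F_q)).
\end{equation*}

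For the order of $\Sp_{2n}(\F_q)$, I would count ordered symplectic bases of $\F_q^{2n}$: since $\Sp_{2n}(\F_q)$ acts simply transitively on the set of such bases, this count equals the group order. The first vector $e_1$ may be any nonzero vector, giving $q^{2n}-1$ choices. Once $e_1$ is fixed, the dual vector $f_1$ must satisfy $\langle e_1,f_1\rangle=1$, a linear condition cutting out an affine hyperplane, and hence contributing $q^{2n-1}$ choices. The remaining pairs $(e_2,f_2),\dots,(e_n,f_n)$ then form a symplectic basis of the non-degenerate symplectic subspace $\langle e_1,f_1\rangle^{\perp}$ of dimension $2(n-1)$. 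Induction on $n$, with base case $\Sp_2(\F_q)=\SL_2(\F_q)$ of order $q(q^2-1)$, therefore gives
\begin{equation*}
\card(\Sp_{2n}(\F_q)) = \prod_{j=1}^{n}\bigl((q^{2j}-1)\cdot q^{2j-1}\bigr) = q^{n^2}\prod_{j=1}^{n}(q^{2j}-1),
\end{equation*}
using the identity $\sum_{j=1}^{n}(2j-1)=n^2$.

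Combining these two equalities and noting that $n^2=(2n)^2/4$ yields the stated formula. There is no serious obstacle here: the lemma is a textbook computation, and the only minor points are identifying the image of the similitude character and organizing the inductive count of symplectic bases.
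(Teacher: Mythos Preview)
The paper does not actually prove this lemma at all: it is introduced with the phrase ``note the simple well-known result'' and stated without proof. Your argument is a correct and standard derivation---reducing to $\Sp_{2n}(\F_q)$ via the surjective similitude character and then counting ordered symplectic bases inductively---so there is nothing to compare against and no gap to flag.
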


Let us take $\ell=7$. We will compute all the elements that appear
in the statement of Theorem \ref{teorema ramificacion 4}.

\begin{itemize}

\item First of all, let us compute the set $\calP$ of prime
numbers which divide the order of $\GSp_4(\F_{\ell})$. Since
$\ell=7$ and $n=2$, Lemma \ref{cardinal} yields that
$\card(\GSp_4(\F_{\ell}))=1659571200=2^{10}\cdot 3^{3}\cdot
5^{2}\cdot 7^4$. Therefore, the set of primes that divide the order
of $\GSp_4(\F_{\ell})$ is $\calP=\{2, 3, 5, 7\}$.

\item Next, we need to compute the element $a\in \F_{7}$ such that
the equation $y^2=x^3 + (1-a)/a x^2 + (1-a)/a x + 1$ defines a
supersingular elliptic curve. In order to do this, we compute the
Deuring polynomial
\begin{equation*}H_7(x)=\sum_{j=0}^3\binom{3}{j}^2\cdot
x^j=x^3 + 2x^2 + 2x + 1= (x+1)(x+3)(x+5).\end{equation*} Therefore,
$x^2-x+5=(x+1)(x+5)$ divides the Deuring polynomial, so we can take
$a=5$ in $\F_7$.

\item The following elements that emerge are the primes $q_1$
and $q_2$. We must choose two different prime numbers, $q_1$ and
$q_2$, which are congruent to $1$ modulo $7$ and such that
$q_2>3\cdot 7=21$. We may take $q_1=29$, $q_2=43$. Now we must find
the genus $2$ curves $C_1$ and $C_2$.
\begin{itemize}

\item Choosing the curve $C_1$: firstly, fix $a_1=1$. According to
Proposition \ref{q_1}, we need to find $b_1$ such that $1-4\cdot
b_1+8\cdot 29$ is not a square modulo $\ell$. For instance, we can
take $b_1=1$. Now that we have a pair $(a_1, b_1)$, we seek a genus
$2$ curve over $\F_{29}$ with $N_1=29 + 1 + a_1=31$ points over
$\F_{29}$ and $N_2=29^2 + 1 + 2 b_1 - a_1^2=843$ points over
$\F_{29^2}$. We know that such a curve exists, and if we scrutinize
the set of all hyperelliptic curves defined over $\F_{29}$ (which is
a finite set), we can obtain the curve given by the hyperelliptic
equation $y^2=x^6+ x^5 + 17x + 5$.

\item Choosing the curve $C_2$: again, fix $a_1=1$. According to
Proposition \ref{q_2}, the first step is to find an element $z\in
\F_{7}$ such that  $z^2-16\cdot 43$ is not a square in $\F_{7}$. For
instance, we can consider $z=1$. Next, we must find $b_2$ such that
$1-4\cdot b_2+8\cdot 43$ is congruent to $(z+1)^2=4$ modulo $7$. For
instance, take $b_2=3$. Note that $1-4\cdot 3 +8\cdot 43=333$ is not
a square in $\Z$. We have a pair $(a_2, b_2)$. We must seek a genus
$2$ curve over $\F_{43}$ with $N_1=43 + 1 + a_2=45$ points over
$\F_{43}$ and $N_2=43^2 + 1 + 2 b_2 - a_2^2=1855$ points over
$\F_{43^2}$. Such a curve exists, and again an exhaustive search can
provide it. For instance, we have taken the curve defined by the
hyperelliptic equation $y^2=x^6 + x^5 + 3x^2 + 13x + 21$.
\end{itemize}

\end{itemize}

Let us now go through Theorem \ref{teorema ramificacion 4} replacing
the elements that appear there by the ones we have chosen above:

\begin{prop} Let $C$ be a genus $2$ curve
defined by a hyperelliptic equation
\begin{equation*}y^2=f(x),\end{equation*}
where $f(x)=f_6x^6 + f_5 x^5 + f_4 x^4 + f_3 x^3 + f_2 x^2 + f_1 x +
f_0\in \Z[x]$ is a polynomial of degree 6 without multiple factors.
Assume that the following conditions hold:
\begin{itemize}
\item The following congruences mod $2^4$ hold:
\begin{equation*}\begin{cases}f_0\equiv f_6\equiv 1\pmod{16}\\
         f_1\equiv f_5\equiv 0 \pmod{16}\end{cases}
         \begin{cases}f_2\equiv f_4\equiv 4 \pmod{16}\\
         f_3\equiv 2\pmod{16}.\end{cases}\end{equation*}

\item The following congruences mod $3$ hold:
\begin{equation*}\begin{cases}f_0\equiv f_6\equiv 1\pmod{3}\\
         f_1\equiv f_5\equiv 0 \pmod{3}\end{cases}
         \begin{cases}f_2\equiv f_4\equiv 1 \pmod{3}\\
         f_3\equiv 0\pmod{3}.\end{cases}\end{equation*}

\item The following congruences mod $5^2$ hold:
\begin{equation*}\begin{cases}f_6\equiv f_5\equiv f_3\equiv f_1\equiv
f_0\equiv 1\mod 25\\ f_4\equiv f_2\equiv 0\mod
25.\end{cases}\end{equation*}

\item The following congruences mod $7^4$ hold:
\begin{equation*}\begin{cases}f_6\equiv f_0 \pmod{7^4}\\
                              f_5\equiv f_1 \pmod{7^4}\\
                              f_4\equiv f_2 \pmod{7^4}.\end{cases}\end{equation*}
Furthermore, \begin{equation*}\begin{cases}f_6\equiv 1 \pmod{7}\\
                              f_5\equiv 0 \pmod{7}\end{cases}
                              \begin{cases}f_4\equiv 2 \pmod{7}\\
                              f_3\equiv 0 \pmod{7}.\end{cases}\end{equation*}

\item The following congruences mod $29$ hold:
\begin{equation*}\begin{cases}f_6\equiv 1 \pmod{29}\\
                              f_5\equiv 1 \pmod{29}\\
                              f_4\equiv 0 \pmod{29}\\
                              f_3\equiv 0 \pmod{29}\end{cases}
                              \begin{cases}f_2\equiv 0 \pmod{29}\\
                              f_1\equiv 17 \pmod{29}\\
                              f_0\equiv 5 \pmod{29}.\end{cases}\end{equation*}

\item The following congruences mod $43$ hold:
\begin{equation*}\begin{cases}f_6\equiv 1 \pmod{43}\\
                              f_5\equiv 1 \pmod{43}\\
                              f_4\equiv 0 \pmod{43}\\
                              f_3\equiv 0 \pmod{43}\end{cases}
                              \begin{cases}f_2\equiv 3 \pmod{43}\\
                              f_1\equiv 13 \pmod{43}\\
                              f_0\equiv 21 \pmod{43}.\end{cases}\end{equation*}

\end{itemize}

Then the Galois extension $\Q(J(C)[7])/\Q$ provides a tamely
ramified Galois realization of $\GSp_4(\F_{7})$.
\end{prop}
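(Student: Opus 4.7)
The proposition is a specialization of Theorem \ref{teorema ramificacion 4} with parameters $\ell=7$, $a=5$, $q_1=29$, $q_2=43$; my plan is to walk through the hypotheses of that theorem and verify each of them for this data, then read off the seven congruence lists as exact specializations.

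First I would compute the set of primes $\calP$. Lemma \ref{cardinal} gives $|\GSp_4(\F_7)| = 6\cdot 7^4\cdot (7^2-1)(7^4-1) = 2^{10}\cdot 3^3\cdot 5^2\cdot 7^4$, so $\calP=\{2,3,5,7\}$. Every prime in $\calP$ is already one of $2$, $3$, $5$ or $\ell$, so the final bullet of Theorem \ref{teorema ramificacion 4} imposing congruences at auxiliary primes $p\in\calP$ is vacuous; this accounts for the absence of additional prime-modulus conditions in the statement. Next I would check $a=5$: the Deuring polynomial is $H_7(x)=x^3+2x^2+2x+1=(x+1)(x+3)(x+5)$ in $\F_7[x]$, and $(x+1)(x+5) = x^2+6x+5 \equiv x^2 - x + 5 \pmod 7$, so $a=5$ is an admissible lift. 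The residue $(1-a)/a = -4/5 \equiv 2\pmod 7$ matches the value $f_4\equiv 2\pmod 7$ prescribed in the $\ell=7$ bullet of the proposition. The primes $q_1=29$ and $q_2=43$ trivially satisfy $q_i\equiv 1\pmod 7$ and $q_2>3\ell=21$.

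The most substantive step is verifying that the two hyperelliptic curves $C_1: y^2=x^6+x^5+17x+5$ over $\F_{29}$ and $C_2: y^2=x^6+x^5+3x^2+13x+21$ over $\F_{43}$ have the Frobenius data required by Propositions \ref{q_1} and \ref{q_2}. I would carry this out by direct point counting, feasible by enumeration since both residue fields are small. For $C_1$, I expect $\#C_1(\F_{29})=31$ and $\#C_1(\F_{29^2})=843$, so that \eqref{PuntosDeC} yields $a_1=1,\,b_1=1$ and $\Delta_0(P_1)=1-4+8\cdot 29=229\equiv 5\pmod 7$, a non-square in $\F_7$, while $a_1\not\equiv 0\pmod 7$. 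Similarly for $C_2$ the counts $45$ and $1855$ yield $a_2=1,\,b_2=3$ and $\Delta_0(P_2)=1-12+8\cdot 43=333\equiv 4\pmod 7$, a non-zero square in $\F_7$ but not a square in $\Z$ (since $18^2=324<333<361=19^2$); the irreducibility argument at the end of the proof of Proposition \ref{q_2} then rules out a decomposition of $P_2(X)$ into linear factors over $\F_7$.

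Once these verifications are complete, each of the seven congruence bullets in the proposition is seen to be precisely the corresponding bullet of Theorem \ref{teorema ramificacion 4} under the chosen substitutions, and the conclusion follows immediately. The only genuine obstacle is the reliable execution of the two point counts for $C_1$ and $C_2$; everything else amounts to bookkeeping of the specialization.
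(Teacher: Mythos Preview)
Your proposal is correct and follows exactly the approach the paper takes: the proposition has no separate proof in the paper, but is presented as the specialization of Theorem~\ref{teorema ramificacion 4} obtained after the preceding computations of $\calP=\{2,3,5,7\}$, the choice $a=5$ via the factorization of $H_7$, the selection $q_1=29$, $q_2=43$, and the exhaustive-search discovery of $C_1$ and $C_2$ with the stated point counts. Your observation that the final bullet of Theorem~\ref{teorema ramificacion 4} becomes vacuous because every prime of $\calP$ is already among $2,3,5,\ell$ is the one point the paper leaves implicit, and your numerical checks of $\Delta_0(P_i)$ and $(1-a)/a$ match the paper's values.
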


It is easy to construct infinitely many such curves. For instance,
we may take the genus $2$ curve defined by the hyperelliptic
equation
\begin{multline*}y^2 = x^6 + 9757776\cdot x^5 + 8853700\cdot x^4 + 10422426\cdot
x^3 + \\
 +677292100\cdot x^2 + 3179077776\cdot x +
342862800.\end{multline*}

\bibliographystyle{amsalpha}

\end{document}